\numberwithin{equation}{section}
\newtheorem{lemma}{Lemma}[section]
\newtheorem{theorem}[lemma]{Theorem}
\newtheorem*{theorem*}{Theorem}
\newtheorem{corollary}[lemma]{Corollary}
\newtheorem{question}{Question}
\newtheorem*{question*}{Open question}
\newtheorem*{proposition*}{Proposition}
\newtheorem*{problem*}{Problem}
\theoremstyle{definition}
\newtheorem{definition}[lemma]{Definition}
\newtheorem*{claim*}{Claim}
\newtheorem*{notation}{Notation}
\newtheorem{remark}{Remark}
\newcommand{\Mod}[1]{\ \mathrm{mod}\ #1}
\theoremstyle{plain}
\newtheorem*{namedthm}{\namedthmname}
\newcounter{namedthm}
\newcommand{\E}{{\mathbb E}}
\newcommand{\N}{{\mathbb N}}
\renewcommand{\P}{{\mathbb P}}
\newcommand{\R}{{\mathbb R}}
\newcommand{\Cov}{\operatorname{Cov}}
\newtheorem{theoremL}{Theorem}
\newtheorem{CorollaryL}{Corollary}
\begin{document}

\title{A pointwise ergodic theorem along return times of rapidly mixing systems}



\author{Sebasti\'an Donoso}
\address{Departamento de Ingenier\'{\i}a Matem\'atica and Centro de Modelamiento Matem{\'a}tico, Universidad de Chile \& IRL 2807 - CNRS, Beauchef 851, Santiago, Chile}
\email{sdonosof@uchile.cl}

\author{Alejandro Maass}
\address{Departamento de Ingeniería Matemática, Centro de Modelamiento Matemático and Millennium Institute Center for Genome Regulation, Santiago, Chile, Universidad de Chile and IRL-CNRS 2807, Beauchef 851, Santiago, Chile.}
\email{amaass@dim.uchile.cl}

\author{Vicente Saavedra-Araya}
\address{Department of Mathematics, University of Warwick, Coventry, United Kingdom}	
\email{vicente.saavedra-araya@warwick.ac.uk}

\thanks{ 
All authors were partially funded by Centro de Modelamiento Matemático (CMM) FB210005, BASAL funds for centers of excellence from ANID-Chile. The first author was partially funded by ANID/Fondecyt/1241346. The second author was funded by the grant ICN2021-044 from the ANID Millennium Science Initiative.
}

\subjclass[2020]{Primary: 37A30; Secondary: 37A50.}

\begin{abstract} We introduce a new class of sparse sequences that are ergodic and pointwise universally $L^2$-good for ergodic averages. That is, sequences along which the ergodic averages converge almost surely to the projection to invariant functions. These sequences are generated randomly as return or hitting times in systems exhibiting a rapid correlation decay. This can be seen as a natural variant of Bourgain's Return Times Theorem.  As an example, we obtain that for any $a\in (0,1/2)$, the sequence $\{n\in\N:\ 2^ny\Mod{1}\in (0,n^{-a})\}$ is ergodic and pointwise universally $L^2$-good for Lebesgue almost every $y\in [0,1]$.  Our approach builds on techniques developed by Frantzikinakis, Lesigne, and Wierdl in their study of sequences generated by independent random variables, which we adapt to the non-independent case.
\end{abstract}

\maketitle
\small
\normalsize
\section{Introduction}

We say that $(X,\mathcal{X},\mu,T)$ is a \emph{measure-preserving system} (m.p.s. for short) if $(X,\mathcal{X},\mu)$ is a probability space and $T\colon X\to X$ is a measure-preserving transformation, i.e., $T$ is measurable with respect to $\mathcal{X}$  and $\mu(T^{-1}A)=\mu(A)$ for all $A\in \mathcal{X}$. 
The measure-preserving system is said to be \emph{ergodic} if every set in the $\sigma$-algebra $\mathcal{I}(T):=\{A\in \mathcal{X}:\ T^{-1}A=A \Mod{\mu}\}$ has either zero or full measure. 
In this paper the natural numbers are $\N = \{1, 2, 3, \ldots\}$. A sequence of integers $(a_n)_{n\in \N}$ is said to be \emph{pointwise universally $L^p$-good} if for any m.p.s. $(X,\mathcal{X},\mu,T)$ and $f\in L^p(\mu)$,  the ergodic averages 
\begin{equation}
    \dfrac{1}{N}\sum_{n=1}^Nf\left(T^{a_n}x\right)\label{eqn:averages}
\end{equation}
converge for almost every $x\in X$. 
The sequence $(a_n)_{n\in \N}$ is {\em ergodic} if the $L^2$-limit of \eqref{eqn:averages} exists and equals  $\mathbb{E}_\mu(f|\mathcal{I}(T))$.
Under this terminology, the classical Birkhoff's ergodic theorem establishes that $a_n=n$ is pointwise universally $L^1$-good and ergodic. 

A classic problem in ergodic theory lies in finding/building interesting sequences or families of sequences that are universally good and/or ergodic.  

Bourgain's Return Times Theorem \cite{ReturnTimes} illustrates that the notion of \emph{return times} can be used to randomly generate universally good sequences.

\begin{theorem}[Return Times Theorem \cite{ReturnTimes}] \label{Return times ergodic theorem} Let $(Y,\mathcal{Y},\nu,S)$ be an ergodic m.p.s. and $E\in \mathcal{Y}$ such that $\nu(E)>0$. For almost every $y\in Y$, the return-times sequence\footnote{ Strictly speaking, for $y\notin E$, the first time $y$ enters $E$ is a {\em hitting time} rather than a return time. However, for simplicity and consistency with the literature, we will use the term ``return time'' to refer to both scenarios. See \cite{Survey_Assani_Presser} for a survey about results of this nature. } $$\Lambda_y:=\{n\in \N:\ S^ny\in E\}$$ is pointwise universally $L^1$-good.
\end{theorem}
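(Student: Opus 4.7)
My plan follows Bourgain's classical strategy: pass first to a dual weighted-average formulation, decompose $\mathbf{1}_E$ relative to the Kronecker factor of $S$, and then attack the non-Kronecker part through a transferred Fourier analytic maximal inequality.

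For the dual reformulation, set $K_N(y):=|\Lambda_y\cap [1,N]|$. Birkhoff's theorem applied to $\mathbf{1}_E$ gives $K_N(y)/N\to \nu(E)$ for $\nu$-a.e.\ $y$, hence the return-times averages along $\Lambda_y$ converge $\mu$-a.e.\ if and only if the weighted averages
\begin{equation*}
A_N(f,x;y):=\frac{1}{N}\sum_{n=1}^N \mathbf{1}_E(S^n y)\,f(T^n x)
\end{equation*}
do, and with the same limit up to the factor $\nu(E)$. The goal becomes: exhibit a $\nu$-full-measure set of $y$ such that $A_N(f,\,\cdot\,;y)$ converges $\mu$-a.e.\ for every m.p.s.\ $(X,\CX,\mu,T)$ and every $f\in L^1(\mu)$.

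Next, given $\ve>0$, decompose $\mathbf{1}_E=g_1+g_2$ with $g_1$ a finite linear combination of $S$-eigenfunctions (living in the Kronecker factor) and $\|g_2\|_{L^2(\nu)}<\ve$, where $g_2$ is orthogonal to the Kronecker factor. The $g_1$-contribution to $A_N$ is a finite sum of terms of the form $\phi(y)\cdot \frac{1}{N}\sum_n \e^{2\pi i n\alpha}f(T^n x)$; these modulated ergodic averages converge $\mu$-a.e.\ for every $\alpha\in\T$ by Bourgain's pointwise Wiener--Wintner theorem. Running this through a countable dense family of test data $(X,T,f)$ and applying Fubini produces a single $\nu$-full-measure set of $y$ that handles the $g_1$-contribution uniformly.

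The main obstacle is controlling the $g_2$-contribution. Here I would aim for a maximal inequality of the form
\begin{equation*}
\int_Y \Bigl\| \sup_N |A_N(g_2,\,\cdot\,;y)| \Bigr\|_{L^2(\mu)}^2\, d\nu(y) \le C\,\|f\|_{L^2(\mu)}^2\,\|g_2\|_{L^2(\nu)}^{\gamma}
\end{equation*}
for some exponent $\gamma>0$, valid uniformly over all m.p.s.\ $(X,\CX,\mu,T)$. Calder\'on's transference principle reduces the bound to a maximal inequality for weighted convolution averages on $\Z$, and the proof is then completed by Bourgain's delicate Fourier/oscillatory-integral estimates, which exploit precisely the vanishing of the Kronecker projection of $g_2$; this circle-method step is the heart of the whole argument and the place where I expect the real work to sit. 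Letting $\ve\to 0$ and combining with the previous paragraph yields convergence for $f$ in a dense subclass; a standard maximal-function/density argument then extends the conclusion from $L^2(\mu)$ to $L^1(\mu)$.
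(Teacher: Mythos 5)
First, note that the paper does not prove this statement: it is Bourgain's Return Times Theorem, quoted verbatim from \cite{ReturnTimes} as motivation for the authors' own results, so there is no in-paper proof to compare against. Your outline must therefore be judged on its own merits.

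The dual reformulation via $K_N(y)/N\to\nu(E)$ and the final passage from a dense subclass to $L^1(\mu)$ (using that the weighted maximal function is dominated by the ordinary ergodic maximal function of $|f|$, which is weak $(1,1)$) are both correct and standard. The genuine gap is in the middle. Your decomposition $\mathbf{1}_E=g_1+g_2$ with $g_1$ a finite combination of eigenfunctions and $g_2$ \emph{simultaneously} orthogonal to the Kronecker factor and of $L^2(\nu)$-norm less than $\ve$ is not available: the projection of $\mathbf{1}_E$ onto the orthocomplement of the Kronecker factor is a fixed function whose norm cannot be shrunk by choosing $\ve$. The correct decomposition has three pieces --- a finite eigenfunction sum, an $L^2$-small remainder of the Kronecker projection, and the fixed Kronecker-orthogonal part $g_3$ --- and the entire difficulty of the theorem is to show that $g_3$ contributes zero. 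For that piece, a bound of the form $\le C\|f\|_{L^2(\mu)}^2\|g_3\|_{L^2(\nu)}^{\gamma}$ is useless, since $\|g_3\|_{L^2(\nu)}$ need not be small; what is needed is control by a quantity that vanishes for Kronecker-orthogonal functions, for instance a Wiener--Wintner-type seminorm such as $\sup_{\alpha}\limsup_{N}\bigl|\frac{1}{N}\sum_{n\le N} g_3(S^ny)e^{2\pi i n\alpha}\bigr|$ (Bourgain's route), or the Bourgain--Furstenberg--Katznelson--Ornstein orthogonality criterion, which bypasses Fourier analysis entirely: if $\frac{1}{N}\sum_{n\le N}g(S^ny)\overline{g(S^ny')}\to 0$ for $\nu\times\nu$-a.e.\ $(y,y')$, then for a.e.\ $y$ the weight $(g(S^ny))_{n}$ is universally good with limit zero. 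You explicitly defer this step (``the place where I expect the real work to sit''), so even setting aside the misformulated inequality, the core of the argument is absent.
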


Note that the ergodic theorem implies that for almost every $y\in Y$,
\[d(\Lambda_y):=\lim_{N\to \infty}\dfrac{|\{1\leq n\leq N:\ S^ny\in E\}|}{N}=\nu(E)>0,\]
showing that the sequence $\Lambda_y$ is not sparse\footnote{Here we use the term sparse to refer to any subset $\Lambda\subseteq \N$ such that $\limsup_{N\to \infty} |\Lambda \cap [1,N]|/N =0$.}. It is also worth noting that $\Lambda_y$ is not necessarily ergodic, as in a finite system, $\Lambda_y$ can take the form $k\N$ for $k\in \N$. 

Finding sparse sequences that are universally good and/or ergodic presents significant challenges. In a groundbreaking work, Bourgain \cite{Bourgain_Pointwise_89} showed that if $q$ is a polynomial with integer coefficients and degree greater than one, the sparse sequence $a_n=q(n)$ is pointwise universally $L^p$-good for any $p>1$, though not necessarily ergodic (see \cite{krause2023pointwiseergodictheoryexamples} for an exposition). More recently,  ergodic sparse sequences have been constructed in the context of single and multiple ergodic averages (see \cite{Boshernitzan_Kolesnik_Quas_Wierdl_erg_sequences:2005, Donoso_Ferre_Koutsogiannis_Sun_multicorr_joint_erg:2024,Donoso_Koutsogiannis_Kuca_Tsinas_Sun_multiple_Hardy,Donoso_Koutsogiannis_Sun_seminorms_polynomials_joint_ergodicity:2022,Donoso_Koutsogiannis_Sun_joint_erg_poly_growth:2023,Frantzikinakis_multidim_Szemeredi_Hardy:2015,Frantzikinakis_joint_ergodicity_sequences:2023,Frantzikinakis_Kuca_joint_erg_comm_poly:2025}). On recent results on the pointwise convergence of multiple ergodic averages, see \cite{Donoso_Sun_pointwise_convergence_multiple_averages:2018, Huang_Shao_Ye_pointwise_convergence_models:2019,KMPW,
krause2025unified,Krause_Tao_Mirek}.

To address the challenge of finding sparse sequences that are both universally good and ergodic, Bourgain introduced in \cite{Bourgain1988} a method to randomly generate such sequences.  
\begin{definition}
    Let $(\Omega,\mathcal{F},\mathbb{P})$ be a probability space and let $(X_n)_{n\in \N}$ be a sequence of independent random variables defined in this space taking values in $\{0,1\}$. For $\omega\in \Omega$, we define the \emph{random sequence of integers generated by $(X_n)_{n\in \N}$} as
    \begin{equation}
        a_n(\omega):=\inf \Big\{k\in \N:\ X_1(\omega)+\cdots+X_k(\omega)=n\Big\}.\label{eq:random_sequence}
    \end{equation}\label{def:random_sequence}
\end{definition}
That is, $a_n(\omega)$ represents the $n$-th time that the trajectory $(X_n(\omega))_{n\in\N}$ hits the state $1$. Note that in order for the sequence in \eqref{eq:random_sequence} to be well defined, it is necessary that $X_n(\omega)=1$ infinitely many times.
Sequences of this kind are called \emph{random sequences of integers}, and the averages like \eqref{eqn:averages} when $a_n=a_n(\omega)$ are usually called \emph{random ergodic averages} (for some results in this direction, see \cite{Frantzikinakis_Lesigne_Wierdl,Frantzikinakis_Lesigne_Wierdl_Szemeredi,Krause_ZorinKranich_1,Krause_ZorinKranich_2,Rosenblatt}). 
Commonly, one restricts to the case in which there is $a\in (0,1)$ such that $\mathbb{P}(X_n=1)=n^{-a}$ for all $n\in \N$. 
In such a case, for almost every $\omega\in \Omega$, the random sequence is sparse, since  $a_n(\omega)$ behaves, in a certain sense, similarly to $\lfloor n^{1/(1-a)}\rfloor$ (see \cite[Lemma A.6]{Frantzikinakis_Lesigne_Wierdl} for a precise statement).
The next result is the aforementioned Bourgain's theorem. 
\begin{theorem}[\cite{Bourgain1988}]
     Let $(\Omega,\mathcal{F},\mathbb{P})$ be a probability space and let $(X_n)_{n\in \N}$ be a sequence of independent random variables defined in this space taking values in $\{0,1\}$. Suppose that there exists $a\in (0,1)$ such that $\mathbb{P}(X_n=1)=n^{-a}$ for all $n\in \N$\footnote{This condition can be replaced by the existence of $\delta$>1 such that $\lim_{n\to \infty}n\mathbb{P}(X_n)/(\log \log n)^\delta=\infty$.}.  Then, for any $p>1$, for almost every $\omega \in \Omega$, the sequence $(a_n(\omega))_{n\in \N}$ is pointwise universally $L^p$-good and ergodic.\label{thm:random_ergodic_averages}
\end{theorem}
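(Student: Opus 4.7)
The plan is to reduce the random return-times average to a weighted average along the full time index, split it into a deterministic and a random piece, and then use a universal maximal inequality to upgrade $L^2$ convergence to pointwise convergence. Set $S_N(\omega):=\sum_{k=1}^N X_k(\omega)$ and $E_N:=\sum_{k=1}^N k^{-a}\sim N^{1-a}/(1-a)$. The strong law of large numbers gives $S_N(\omega)/E_N\to 1$ almost surely, and a change of variables rewrites
\begin{equation*}
\frac{1}{M}\sum_{n=1}^M f(T^{a_n(\omega)}x) \;=\; \frac{1}{S_N(\omega)}\sum_{k=1}^N X_k(\omega)\, f(T^k x)
\end{equation*}
for $M=S_N(\omega)$; hence it suffices to study the limit of $C_N(\omega,x):=E_N^{-1}\sum_{k=1}^N X_k(\omega) f(T^k x)$ as $N\to\infty$.

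Splitting $X_k=k^{-a}+(X_k-k^{-a})$ gives $C_N=D_N+R_N$ with
\begin{equation*}
D_N(x) := \frac{1}{E_N}\sum_{k=1}^N k^{-a} f(T^k x), \qquad R_N(\omega,x) := \frac{1}{E_N}\sum_{k=1}^N \bigl(X_k(\omega)-k^{-a}\bigr) f(T^k x).
\end{equation*}
The deterministic piece $D_N$ is a weighted Cesàro average with a monotone, regularly varying weight, so the classical weighted pointwise ergodic theorem (equivalently, a short Abel summation from Birkhoff) yields $D_N(x)\to \mathbb{E}_\mu(f\mid \mathcal{I}(T))$ for $\mu$-a.e.\ $x$ on every m.p.s. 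For the random piece, independence of the $X_k$ together with $\Var(X_k)\le k^{-a}$ gives via Fubini
\begin{equation*}
\mathbb{E}_\omega \int_X |R_N(\omega,x)|^2\,d\mu(x) \;\le\; \frac{\|f\|_{L^2}^2}{E_N^2}\sum_{k=1}^N k^{-a} \;=\; \frac{\|f\|_{L^2}^2}{E_N},
\end{equation*}
which decays like $N^{-(1-a)}$. Along the lacunary subsequence $N_j=2^j$ this bound is summable, so Chebyshev and Borel--Cantelli produce an event of full $\mathbb{P}$-measure on which $\|R_{N_j}(\omega,\cdot)\|_{L^2(\mu)}\to 0$.

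The main obstacle is passing from norm convergence along $N_j$ to pointwise convergence of the original averages for every $M$ and $\mu$-a.e.\ $x$. For this I would establish, for almost every $\omega$ and every $p>1$, a universal maximal inequality
\begin{equation*}
\Bigl\|\sup_{M\ge 1}\Bigl|\tfrac{1}{M}\sum_{n=1}^M f(T^{a_n(\omega)}x)\Bigr|\Bigr\|_{L^p(\mu)} \;\le\; C(\omega)\,\|f\|_{L^p(\mu)}
\end{equation*}
valid on every m.p.s.\ $(X,\mathcal{X},\mu,T)$. Via Calderón's transference principle, this reduces to a deterministic $\ell^p(\mathbb{Z})$ maximal estimate for convolution with the random kernel $k\mapsto S_N(\omega)^{-1} X_k(\omega)\mathbf{1}_{[1,N]}(k)$; one would control its oscillation almost surely by a Bernstein-type concentration bound on dyadic time scales together with the classical $\ell^p$ maximal estimate for the smooth deterministic weights $k^{-a}/E_N$, summing the resulting dyadic errors geometrically to produce a finite $C(\omega)$. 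Once the maximal inequality is in hand, pointwise convergence extends from the dense set of bounded functions (already provided by the $L^2$ analysis above) to all of $L^p$ by standard approximation, and the limit is forced to be $\mathbb{E}_\mu(f\mid \mathcal{I}(T))$ by the analysis of $D_N$, giving both pointwise universality and ergodicity.
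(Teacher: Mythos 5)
First, a structural remark: the paper does not prove \cref{thm:random_ergodic_averages} at all --- it is quoted from Bourgain's 1988 paper --- so the only internal point of comparison is the proof of \cref{main}, the non-independent analogue restricted to $a\in(0,1/2)$ and $p=2$. Your opening reduction (rewriting the return-time average as $S_N(\omega)^{-1}\sum_{k\le N}X_k(\omega)f(T^kx)$, invoking the law of large numbers for $S_N/E_N$, and splitting off the deterministic weight $k^{-a}$) coincides with the first half of that proof. The divergences, and the gaps, are all in how you treat the random remainder $R_N$.

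The concrete gap is this: your $L^2$ analysis produces only $\|R_{N_j}(\omega,\cdot)\|_{L^2(\mu)}\to0$ along the lacunary times $N_j=2^j$. That is norm convergence along a subsequence; it is not pointwise convergence of the averages for any single $f$, bounded or not. Consequently, when you later assert that pointwise convergence ``extends from the dense set of bounded functions (already provided by the $L^2$ analysis above)'', the dense-class input does not yet exist: a maximal inequality only shows that the set of $f$ with a.e.\ convergent averages is closed in $L^p$; it cannot create the first convergent example. The fix at this stage is cheap: since $\sum_j\mathbb{E}_\omega\int_X|R_{N_j}|^2\,d\mu<\infty$, Tonelli gives $R_{N_j}(\omega,x)\to0$ for a.e.\ $(\omega,x)$, and then the sandwich lemma for non-negative summands (\cref{lemma1}, i.e.\ Corollary A.2 of Frantzikinakis--Lesigne--Wierdl) upgrades lacunary to full convergence with no maximal inequality at all; this is exactly the route the paper takes for \cref{main}. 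Two further problems remain even after that repair. First, the full-measure set of $\omega$ must be chosen before the system $(X,\mathcal{X},\mu,T)$ and $f$ are quantified; your Borel--Cantelli argument produces an exceptional set depending on both, whereas the paper is careful to bound everything by random quantities depending on $\omega$ alone. Second, the universal maximal inequality you defer to is the actual content of Bourgain's theorem (and is genuinely needed once $p$ is close to $1$), and the sketch is not adequate: after Calder\'on transference one must control, almost surely and with polynomial savings, $\sup_{\theta}\bigl|\sum_{k\le N}(X_k(\omega)-k^{-a})e^{2\pi i k\theta}\bigr|$; a Bernstein-type concentration bound on dyadic time scales gives pointwise-in-$\theta$ estimates, and passing to the supremum over $\theta$ requires the metric-entropy/chaining argument that occupies most of Bourgain's paper. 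As written, the proposal assumes the hard part.
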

The case $p=1$ was later shown by LaVictoire \cite{LaVictoire} for $a\in (0,1/2)$. 
\medskip

 In this paper, we give a version of \cref{thm:random_ergodic_averages} in the context of non-independent random variables. 
 From that result, we retrieve a natural variant of the Return Times Theorem (\cref{Return times ergodic theorem}), in which we allow the sets to vary with $n$. This is done by considering sequences of the form $\{n\in \N:\ S^ny\in E_n\}$ for some measurable sets $(E_n)_{n\in \N}$, with $\nu(E_n)=n^{-a}$. 
 This method allows us to build new families of sequences that are sparse, pointwise universally good and ergodic. The definition of return times for a sequence of sets is as follows.
 
\begin{definition}
    Let $(Y,\mathcal{Y},\nu,S)$ be an ergodic m.p.s. and $\mathcal{E}=(E_n)_{n\in\N}$ be a sequence of measurable subsets of $Y$. We define the entry time of $y$ to $\mathcal{E}$ as $$r_1(y,\mathcal{E}):=\inf\{k\in \N:\ S^ky\in E_k\}.$$ Inductively, for $n>1$ the $n$-th return time is defined by
    $$r_n(y,\mathcal{E}):=\inf\{k>r_{n-1}(y,\mathcal{E}):\ S^ky\in E_k\}.$$
    Alternatively, $(r_n(y,\mathcal{E}))_{n\in \N}$ is the enumeration of the set $\{n\in \N: S^ny \in E_n  \}$ in increasing order. 
    The sequence $(r_n(y,\mathcal{E}))_{n\in \N}$ is called the sequence of return times (or hitting times) of $y$ to $\mathcal{E}$.
    \label{def:return_times}
\end{definition}

It is worth mentioning that the sequence of return times has been widely studied in systems that exhibit rapidly mixing properties. For instance, if $p>1$, $S(y)=p y\Mod{1}$ and $\lambda$ is the Lebesgue measure, an important work of Philipp \cite{Philipp_Walter} shows that for any sequence of intervals $(I_n)_{n\in \N}\subseteq [0,1)$,

\begin{equation}
\lim_{N\to\infty}\dfrac{|\{1\leq n\leq N:\ S^ny\in I_n\}|}{\sum_{n=1}^N\lambda(I_n)}=1\label{sBC_}
\end{equation}
for Lebesgue almost every $y\in [0,1)$. In such a case, the sequence $(I_n)_{n\in\N}$ is said to be \emph{strongly Borel-Cantelli}. Further results on strongly Borel-Cantelli sequences can be found in \cite{NonUniformly_BorelCantelli,Kim,Kleinbock_Margulis,LeVeque,Persson_Rodriguez}. In addition, the set of \emph{eventually always hitting points} has recently been studied in dynamical systems with rapidly mixing properties \cite{Ganotaki_Persson,Kirsebom_Kunde_Persson,Kleinbock_Konstantoulas_Richter}. 
\medskip

Before stating our main result, note that both $(\Omega,\mathcal{F},\mathbb{P})$ and $(Y,\mathcal{Y},\mu)$ represent probability spaces, but we make the distinction in notation to refer to a general probability space and to the specific case of a measure-preserving system.  Sequences in \cref{def:random_sequence} and \cref{def:return_times} are related. In fact, it turns out that the sequence $(r_n(y,\mathcal{E}))_{n\in \N}$ can be understood as the random sequence from \cref{def:random_sequence} associated with the random variables $X_n(y):=\mathbbm{1}_{S^{-n}E_n}(y)$ when $(\Omega,\mathcal{F},\mathbb{P})=(Y,\mathcal{Y},\mu)$, however, those random variables are generally not independent. We introduce the following class of sequences $(E_n)_{n\in\N}$ that will enable us to control the dependence of such random variables.
\begin{definition}
Let $(Y,\mathcal{Y},\nu,S)$ be a m.p.s. The collection $\mathcal{E}:=(E_n)_{n\in \N}$ of measurable subsets of $Y$ has a \emph{decay of correlation against $L^1(\nu)$ with rate $\rho\colon\N\to \mathbb{R}_{+}$} if there exists $C>0$ such that
    \[\left|\int_{E_n} g\circ S^md\nu-\nu(E_n) \cdot \int_Y gd\nu \right|\leq C\rho(m)\|g\|_{L^{1}}\]
    for all $n,m\in\N$ and $g\in L^1(\nu)$ supported on $\bigcup_{n\in\N}E_n$. \label{def:correlations}
\end{definition} As we shall see, examples of sequences satisfying the above property arise naturally in dynamical systems exhibiting certain decay of correlations.
\medskip

In this paper, we obtain the following result concerning the convergence of ergodic averages along return-time sequences. 

\begin{theoremL}
    Let $(Y,\mathcal{Y},\nu,S)$ be a m.p.s. and let $\mathcal{E}=(E_n)_{n\in \N}$ be a sequence of measurable sets such that:
    \begin{enumerate}
        \item  $\mathcal{E}$ has decay of correlation against $L^1(\nu)$ with rate $\rho$ decreasing and summable, and \label{C1}
        \item there exist $a\in (0,1/2)$ and $c>0$ such that $\nu(E_n)=cn^{-a}$.\label{C2}
       
    \end{enumerate}  Then, for almost every $y\in Y$, the sequence of return times $(r_n(y,\mathcal{E}))_{n\in \N}$ is pointwise universally $L^2$-good and ergodic. That is, for every m.p.s. $(X,\mathcal{X},\mu,T)$ and $f\in L^{2}(\mu)$,
    \begin{equation}
        \lim_{N\to\infty}\dfrac{1}{N}\sum_{n=1}^Nf(T^{r_n(y,\mathcal{E})}x)=\mathbb{E}_\mu(f|\mathcal{I}(T))(x)\label{eqn:thmA}
    \end{equation}
   for almost every $x\in X$.\label{thmA}
\end{theoremL}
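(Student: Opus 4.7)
The plan is to adapt the strategy of Frantzikinakis, Lesigne and Wierdl \cite{Frantzikinakis_Lesigne_Wierdl} to the non-independent setting afforded by hypothesis \ref{C1}. Set $X_n(y) := \one_{S^{-n}E_n}(y)$ and $W(N) := \sum_{k=1}^N \nu(E_k) \sim \tfrac{c}{1-a}\, N^{1-a}$. The substitution $k = r_n(y,\mathcal{E})$ rewrites the return-time averages as weighted ergodic averages,
$$\sum_{n=1}^N f\bigl(T^{r_n(y,\mathcal{E})}x\bigr) \;=\; \sum_{k=1}^{r_N(y,\mathcal{E})} X_k(y)\, f(T^k x),$$
and a strong Borel-Cantelli-type estimate $N/W(r_N(y,\mathcal{E})) \to 1$ (itself a consequence of the covariance bound below) reduces \eqref{eqn:thmA} to proving that, for $\nu$-a.e.\ $y$,
$$\lim_{N\to\infty}\; \frac{1}{W(N)}\sum_{k=1}^N X_k(y)\, f(T^k x) \;=\; \E_\mu\bigl(f \mid \CI(T)\bigr)(x) \quad \mu\text{-a.e.,}\; \forall\, f \in L^2(\mu).$$

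By the Banach principle, this splits into (a) convergence on a dense subclass of $L^2(\mu)$, and (b) a maximal inequality on $L^2(\mu)$. For (a), the spectral theorem reduces the matter to controlling the random trigonometric polynomial
$$P_N(\theta, y) := \frac{1}{W(N)}\sum_{k=1}^N X_k(y)\, e^{2\pi i k \theta}.$$
A direct second-moment computation, based on the covariance estimate
$$\bigl|\Cov_\nu(X_k, X_l)\bigr| \;\leq\; C\, \rho(|k-l|)\, \nu\bigl(E_{\max(k,l)}\bigr),$$
which follows from Definition \ref{def:correlations} with $g = \one_{E_{\max(k,l)}}$, combined with the summability of $\rho$, gives
$$\int_Y |P_N(\theta,y)|^2\, d\nu(y) \;\leq\; \frac{C'}{W(N)} \quad \text{uniformly in } \theta \notin \Z.$$
A Menchov-Rademacher argument along the dyadic subsequence $N_j = 2^j$, a chaining estimate in $\theta$, and a Sobolev-type interpolation between dyadic times then yield, for $\nu$-a.e.\ $y$, that $P_N(\theta, y) \to 0$ uniformly on any compact set avoiding $\Z$, while $P_N(0,y) \to 1$. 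Via the spectral theorem this identifies the limit as $\E_\mu(f \mid \CI(T))$: the component in the rational Kronecker factor contributes the conditional expectation, and the orthogonal complement vanishes in $L^2$.

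For (b), I would use Calder\'on's transference principle to reduce the maximal inequality to an $\ell^2(\Z)$ bound for the random convolutions $g \mapsto \frac{1}{W(N)}\sum_k X_k(y)\, g(\cdot - k)$. This follows from a square-function / oscillation estimate, once again leveraging the second-moment bounds above, together with the spectral multiplier argument applied to their Fourier symbols $P_N(\theta,y)$. The restriction $a \in (0,1/2)$ enters here: the variance decays like $W(N)^{-1} \sim N^{-(1-a)}$, and the interpolation between dyadic scales to control the full supremum requires $1-a > 1/2$, i.e.\ $a < 1/2$, for the resulting series to be summable.

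The main obstacle is the absence of independence. In \cite{Frantzikinakis_Lesigne_Wierdl} independence makes variance cross-terms vanish outright and permits direct use of Bernstein or Rosenthal-type concentration inequalities for higher moments. Here those ingredients are replaced by the decay-of-correlations bound from Definition \ref{def:correlations}, whose summability keeps off-diagonal contributions of the same order as the diagonal ones, so that the final variance estimates match the independent case up to multiplicative constants. The delicate technical step is verifying that every place in the FLW argument where independence is invoked can be replaced by this $L^1$-decay of correlations applied to bounded test functions, and, in the higher-moment estimates needed for the maximal inequality, iterating the correlation bound in a combinatorially controlled way.
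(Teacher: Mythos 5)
Your overall frame—reduce to the weighted averages $\frac{1}{W(N)}\sum_{k\le N} X_k(y) f(T^kx)$ via $N/W(r_N(y,\mathcal{E}))\to 1$—matches the paper, but from that point on you take a Bourgain-style Fourier-analytic route (Banach principle, spectral theorem, random trigonometric polynomials, Calder\'on transference, maximal inequality), whereas the paper never proves a maximal inequality at all. The paper instead applies the van der Corput inequality (\cref{vdc}) to $v_n = Y_n(y)T^nf$ with $M=\lfloor N^{2a+\varepsilon}\rfloor$, which converts the $L^2(\mu)$-norm of the centered averages into the purely probabilistic quantity $\sum_{m\le M}|\sum_n Y_{n+m}Y_n|$; this is then controlled in expectation over $y$ by a four-fold covariance estimate (\cref{key}) derived from property \eqref{P}, summed along lacunary times $\lfloor\gamma_k^N\rfloor$, and upgraded to the full sequence by \cref{lemma1}. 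That design is deliberate: it only ever requires second and fourth joint moments of the $X_n$, which is exactly what an $L^1$-decay of correlations can deliver.

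The genuine gap in your proposal is part (b), and to a lesser extent the $\sup_\theta$ step in part (a). A maximal inequality for the random convolution operators, or uniform-in-$\theta$ smallness of $P_N(\theta,y)$ on compacts avoiding $\Z$, requires a union bound (or chaining) over a net of roughly $N^{O(1)}$ frequencies, and with only the variance bound $\E_\nu|P_N(\theta,\cdot)-\E_\nu P_N(\theta,\cdot)|^2\ll W(N)^{-1}\sim N^{-(1-a)}$ Chebyshev gives a total failure probability of order $N^{O(1)}\cdot N^{-(1-a)}$, which diverges; even the fourth-moment bound one can extract from \eqref{P} (as in \cref{key}) yields $N^{O(1)}\cdot N^{-2(1-a)}$, still divergent for the relevant range of $a$. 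In the independent setting this step is rescued by Salem--Zygmund/Bernstein-type exponential concentration for $\sup_\theta|\sum_k Y_k e^{2\pi i k\theta}|$, but no such sub-Gaussian control follows from \cref{def:correlations}, which only constrains low-order joint moments; your closing remark that the higher-moment estimates can be obtained by ``iterating the correlation bound in a combinatorially controlled way'' is precisely the unproved step, and it is not clear it can be carried out to arbitrary order from an $L^1$-correlation hypothesis. (Two smaller corrections: your stated bound $\int_Y|P_N(\theta,y)|^2\,d\nu(y)\le C'/W(N)$ cannot hold uniformly in $\theta\notin\Z$ since the deterministic mean $\frac{1}{W(N)}\sum_k\sigma_k e^{2\pi ik\theta}$ is close to $1$ near $\theta\in\Z$—you must center first and treat the mean by summation by parts; and you also slightly mischaracterize \cite{Frantzikinakis_Lesigne_Wierdl}, whose $L^2$ argument is itself the van der Corput/lacunary one rather than a concentration-plus-maximal-inequality scheme.)
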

\cref{thmA} follows from the more general \cref{main}, which can be understood as a non-independent version of \cref{thm:random_ergodic_averages}. Regarding the proof of \cref{main}, we follow the method developed in \cite{Frantzikinakis_Lesigne_Wierdl} by Frantzikinakis, Lesigne and Wierdl to study semi-random ergodic averages for independent random variables. The absence of independence in our context raises some difficulties in the proof, which we address by controlling the covariance of the random variables involved. 

Immediate applications of \cref{thmA} lie in considering systems $(Y,\mathcal{Y},\nu,S)$ that have a decay of correlation for functions of bounded variation against $L^1(\nu)$ with summable rate (see \cref{def:correlationsBV} and \cref{cor}). For instance, it is well known that the system defined by the multiplication by $p \mod 1$  in the unit interval with Lebesgue measure has an exponentially fast decay of correlations. Thus, we obtain the following.
\begin{CorollaryL}
    Let $a\in (0,1/2)$.  For any integer $p\geq 2$, and for almost every $y\in [0,1]$ (with respect to the Lebesgue measure), 
    the sequence
   $$
    \Big\{n\in\N:\ p^ny\Mod{1}\in (0,n^{-a})\Big\}$$
is pointwise universally $L^2$-good and ergodic. \label{corA}
\end{CorollaryL}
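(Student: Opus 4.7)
The plan is to specialize \cref{thmA} to the measure-preserving system $(Y,\mathcal{Y},\nu,S)=([0,1],\mathcal{B}([0,1]),\lambda,S_p)$, where $\lambda$ denotes Lebesgue measure and $S_p(y)=py\Mod{1}$, together with the sequence $\mathcal{E}=(E_n)_{n\in\N}$ defined by $E_n:=(0,n^{-a})$. Observe that the enumeration of $\{n\in\N:\ p^ny\Mod{1}\in(0,n^{-a})\}$ is precisely the return-time sequence $(r_n(y,\mathcal{E}))_{n\in\N}$ from \cref{def:return_times}, so the corollary follows the moment the two hypotheses of \cref{thmA} are verified for this specific choice.

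Condition (ii) is immediate: $\lambda(E_n)=n^{-a}$, so it holds with $c=1$ and the prescribed $a\in(0,1/2)$.

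The substantive step is condition (i). My plan is to invoke the classical spectral-gap theorem of Lasota--Yorke / Ionescu-Tulcea--Marinescu for the transfer (Perron--Frobenius) operator of $S_p$ acting on functions of bounded variation, which yields constants $C_0>0$ and $\theta\in(0,1)$ with
\[
\left|\int f\cdot (g\circ S_p^m)\,d\lambda-\int f\,d\lambda\int g\,d\lambda\right|\le C_0\,\theta^m\,\|f\|_{BV}\,\|g\|_{L^1(\lambda)}
\]
for all $f\in BV([0,1])$, $g\in L^1(\lambda)$ and $m\in\N$. This is precisely the BV-versus-$L^1$ correlation decay encoded by \cref{def:correlationsBV}, and \cref{cor} is designed to convert such a bound into the form of \cref{def:correlations}. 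Taking $f=\mathbbm{1}_{E_n}$ and noting that the indicator of an open interval has total variation at most $2$ and sup-norm $1$, so that $\|\mathbbm{1}_{E_n}\|_{BV}$ is bounded by a universal constant independent of $n$, one obtains
\[
\left|\int_{E_n}g\circ S_p^m\,d\lambda-\lambda(E_n)\int g\,d\lambda\right|\le C\,\theta^m\,\|g\|_{L^1(\lambda)}
\]
uniformly in $n$, for all $m\in\N$ and $g\in L^1(\lambda)$. This is condition (i) of \cref{thmA} with the decreasing and summable rate $\rho(m)=\theta^m$, so \cref{thmA} applies and yields \eqref{eqn:thmA} for almost every $y\in[0,1]$.

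I do not anticipate a real obstacle: the exponential decay of correlations for the piecewise-expanding map $S_p$ is textbook material, and the uniform-in-$n$ bound on $\|\mathbbm{1}_{E_n}\|_{BV}$ is automatic because the $E_n$ are open intervals. All the heavy lifting is carried out in \cref{thmA} itself; this corollary is a clean specialization of the main result to a concrete rapidly mixing system.
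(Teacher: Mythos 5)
Your proposal is correct and follows essentially the same route as the paper: the authors also deduce \cref{corA} from \cref{thmA} via the exponential BV-versus-$L^1$ decay of correlations for the map $y\mapsto py\Mod 1$, packaged through \cref{cor}, using exactly the observation that $\|\mathbbm{1}_{I_n}\|_{BV}$ is uniformly bounded for intervals. The only cosmetic difference is that you verify condition (i) of \cref{thmA} directly rather than citing the intermediate \cref{cor}.
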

An exponentially fast decay of correlation for functions of bounded variation against $L^1$ also holds when considering the Gauss map with its invariant measure\footnote{The measure defined by $d\nu=\dfrac{1}{\ln(2)\cdot(1+x)}d\lambda$, where $\lambda$ is the Lebesgue measure, is invariant for the Gauss map $G$.}. It is also worth mentioning that condition (\ref{C2}) in \cref{thmA} can be easily replaced by the existence of $a\in (0,1/2)$ and $c>0$ such that $\nu(E_n)=cn^{-a}$ for every $n$ large enough. Hence, we conclude the following.
\begin{CorollaryL}
    Let $a\in (0,1/2)$ and let $G:[0,1)\mapsto [0,1)$ be the Gauss map, defined by $G(y)=1/y \mod{1}$ if $y\neq 0$, and $G(0)=0$.  
For any $b>1$, and for almost every $y\in [0,1)$ (with respect to the Lebesgue measure), the sequence
    $$\left\{n\in\N:\ G^ny\in (0, b^{n^{-a}}-1) \right\}$$
    is pointwise universally $L^2$-good and ergodic. \label{corB}
\end{CorollaryL}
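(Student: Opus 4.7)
The plan is to apply \cref{thmA} with $(Y,\mathcal{Y},\nu,S) = ([0,1), \mathcal{B}, \nu, G)$, where $\nu$ is the Gauss invariant measure, and $\mathcal{E} = (E_n)_{n\in \N}$ with $E_n = (0, b^{n^{-a}}-1)$. Only the two hypotheses (\ref{C1}) and (\ref{C2}) need to be verified; the conclusion will then transfer from $\nu$-a.e.\ $y$ to Lebesgue-a.e.\ $y$, since $\nu$ and Lebesgue measure are equivalent on $[0,1)$.

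Condition (\ref{C2}) is a direct computation with the density of $\nu$:
\[
\nu(E_n) \;=\; \int_0^{b^{n^{-a}}-1}\frac{dx}{\ln 2 \cdot (1+x)} \;=\; \frac{\ln\bigl(b^{n^{-a}}\bigr)}{\ln 2} \;=\; \frac{\ln b}{\ln 2}\, n^{-a},
\]
so $\nu(E_n)=c\,n^{-a}$ with $c = \ln b / \ln 2 > 0$.

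For condition (\ref{C1}), the ingredient I would invoke is the classical Lasota--Yorke / Wirsing type spectral theory of the Gauss--Kuzmin transfer operator, which yields exponential decay of correlations for functions of bounded variation against $L^1(\nu)$: there exist $C>0$ and $\theta \in (0,1)$ such that
\[
\left| \int \varphi\cdot (g\circ G^m)\, d\nu \;-\; \int \varphi\,d\nu \int g\,d\nu \right| \;\leq\; C\,\theta^m\,\|\varphi\|_{BV}\,\|g\|_{L^1}
\]
for every $\varphi$ of bounded variation and $g \in L^1(\nu)$. Plugging in $\varphi = \mathbbm{1}_{E_n}$, whose total variation is uniformly bounded since each $E_n$ is an interval, produces exactly the decay of correlation of \cref{def:correlations} with the summable rate $\rho(m)=\theta^m$; this is the same mechanism used to deduce \cref{corA} via \cref{cor}.

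With both hypotheses verified, \cref{thmA} immediately yields the pointwise universal $L^2$-goodness and ergodicity of $(r_n(y,\mathcal{E}))_{n \in \N}$ for $\nu$-a.e.\ $y$, and hence for Lebesgue-a.e.\ $y$ by the equivalence of the two measures. The only nontrivial input is the classical exponential mixing of the Gauss map against $L^1(\nu)$; the rest of the argument is a direct unpacking of definitions, so I do not anticipate any substantive obstacle.
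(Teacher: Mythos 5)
Your proposal is correct and follows essentially the same route as the paper: the paper deduces \cref{corB} from \cref{cor} (itself a direct application of \cref{thmA}), using the exponential BV-against-$L^1$ decay of correlations of the Gauss map and the computation $\nu\bigl((0,b^{n^{-a}}-1)\bigr)=\tfrac{\ln b}{\ln 2}\,n^{-a}$, exactly as you do. Your explicit remark on passing from $\nu$-a.e.\ to Lebesgue-a.e.\ via equivalence of the two measures is a point the paper leaves implicit, and it is handled correctly.
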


Further examples of systems with exponentially fast decay of correlation for bounded variation against $L^1(\nu)$ (and therefore, applications of \cref{thmA}) include piecewise expanding maps \cite{Liverani}, some piecewise monotonic maps \cite{Liverani_Saussol_Vaienti} and some piecewise convex maps \cite{Cui}. In the multidimensional case, applications for \cref{thmA} can be obtained as a consequence of \cite[Theorem 1.1]{Eslami_Melbourne_Vaienti}, which provides decay of correlation for bounded variation against functions in $L^1$ supported on $[3/4,1]\times [0,1]$.

\vspace{0.7cm}

\textbf{Acknowledgements.} The authors express their gratitude to Joel Moreira for valuable discussions and to Ian Melbourne for providing useful references. We also thank the anonymous referees for their constructive feedback and valuable suggestions, which have improved the clarity of this paper.

\section{The class \eqref{P} and control of dependence}

 In this section, we introduce a class of sequences of sets and some auxiliary results that will be useful in the proof of our main result \cref{main} (and as a consequence, \cref{thmA}). We will focus on a purely probabilistic context, with no dynamics involved. 
 
We aim to study the sequence of random variables $X_n(\omega):=\mathbbm{1}_{E_n}(\omega)$, where $(E_n)_{n\in \N}$ is a sequence of measurable sets in a probability space $(\Omega,\mathcal{F},\mathbb{P})$, and find conditions to ensure that the random sequence of integers $(a_n(\omega))_{n\in\N}$ generated by the non-independent random variables $(X_n)_{n\in \N}$ is pointwise universally good. In this direction, we introduce the following class.

 \begin{definition} Let $(\Omega,\mathcal{F},\mathbb{P})$ be a probability space. We say that the collection of measurable sets $(E_n)_{n\in\N}$ satisfies the property \eqref{P} if there exists a decreasing sequence $\rho:\N\to \mathbb{R}_{+}$  such that $\displaystyle \lim_{n\to\infty}\rho(n)=0$ and  for every finite sequence of integers $1 \leq n_1<n_2<\cdots<n_k$,
 \begin{equation}
     \left|\mathbb{P}(E_{n_1} \cap \cdots \cap E_{n_k})-\mathbb{P}(E_{n_1})\mathbb{P}(E_{n_2}\cap \cdots \cap E_{n_k})\right|\leq \rho(n_2-n_1)\mathbb{P}(E_{n_2}\cap \cdots \cap E_{n_k}).  \tag{P}\label{P}
 \end{equation}
 \end{definition}
 \medskip
 Note that, if the collection $(E_n)_{n \in \N}$ is independent, then it satisfies \eqref{P} with rate $\rho=0$. Although condition \eqref{P} may seem artificial, numerous natural examples possess this property in the context of dynamical systems exhibiting decay of correlations for bounded variation against $L^1$.

 The next lemma allows us to conclude convergence almost everywhere of averages by proving convergence almost everywhere along lacunary subsequences\footnote{A sequence $(\gamma_n)_{n\in\N}$ is said to be lacunary if there exists $\gamma>1$ such that $\gamma_{n+1}/\gamma_n\geq \gamma$ for all $n\in \N$.}. The proof can be found in \cite[Corollary A.2]{Frantzikinakis_Lesigne_Wierdl}.

\begin{lemma} Let $(\Omega,\mathcal{F},\mathbb{P})$ be a probability space, for every $n\in \N$ consider a non-negative measurable function  $f_n\colon\Omega\to \mathbb{R}$, and let $(W_n)_{n\in \mathbb{N}}$ be an increasing sequence of positive numbers such that
\begin{equation}
    \lim_{\gamma\to 1^{+}}\limsup_{n\to \infty} \dfrac{W_{\lfloor\gamma^{n+1}\rfloor}}{W_{\lfloor\gamma^{n}\rfloor}}=1.\label{lemma1_h}
\end{equation}

For $N\in \N$ and $\omega\in \Omega$, define
\[A_N(\omega):=\dfrac{1}{W_N}\sum_{n=1}^{N} f_n(\omega).\]
Assume that there exist a function $f\colon \Omega\to \mathbb{R}$ and a real-valued sequence $(\gamma_k)_{k\in \mathbb{N}}$ such that $1<\gamma_k<+\infty$, $\gamma_k\to 1$ as $k\to \infty$, and for all $k\in \mathbb{N}$   
\[\lim_{N\to \infty}A_{\lfloor\gamma_k^{N}\rfloor}(\omega)=f(\omega)\]
for almost every $\omega\in\Omega$.
Then,
\[\lim_{N\to \infty} A_N(\omega)=f(\omega) \]
for almost every $\omega\in\Omega$.\label{lemma1}
\end{lemma}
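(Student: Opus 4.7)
The plan is to carry out a standard sandwich argument, exploiting the non-negativity of the $f_n$ and the slow-growth condition on $W_N$ to compare $A_N(\omega)$ with the values of $A$ along the known convergent lacunary subsequence $N_j^{(k)} := \lfloor \gamma_k^j \rfloor$.

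First, I would fix $k \in \N$ and set $\gamma = \gamma_k$. For each $N$, let $j=j(N)$ be the unique integer with $\lfloor\gamma^{j}\rfloor\le N<\lfloor\gamma^{j+1}\rfloor$. Since the $f_n$ are non-negative and $(W_N)$ is increasing, the partial sum up to $N$ lies between the partial sums up to $\lfloor\gamma^{j}\rfloor$ and $\lfloor\gamma^{j+1}\rfloor$, so we obtain the pointwise sandwich
\[
\frac{W_{\lfloor\gamma^{j}\rfloor}}{W_{\lfloor\gamma^{j+1}\rfloor}}\,A_{\lfloor\gamma^{j}\rfloor}(\omega)\;\le\; A_N(\omega)\;\le\; \frac{W_{\lfloor\gamma^{j+1}\rfloor}}{W_{\lfloor\gamma^{j}\rfloor}}\,A_{\lfloor\gamma^{j+1}\rfloor}(\omega).
\]
By hypothesis, off a null set $\Omega_k$, both $A_{\lfloor\gamma^{j}\rfloor}(\omega)$ and $A_{\lfloor\gamma^{j+1}\rfloor}(\omega)$ converge to $f(\omega)$ as $j\to\infty$. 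Taking $\liminf$ and $\limsup$ in $N$ (noting $j(N)\to\infty$) and writing $R(\gamma):=\limsup_{j\to\infty} W_{\lfloor\gamma^{j+1}\rfloor}/W_{\lfloor\gamma^{j}\rfloor}$, we get, for every $\omega\notin\Omega_k$ at which $f(\omega)$ is finite,
\[
\frac{1}{R(\gamma_k)}\,f(\omega)\;\le\;\liminf_{N\to\infty}A_N(\omega)\;\le\;\limsup_{N\to\infty}A_N(\omega)\;\le\; R(\gamma_k)\,f(\omega).
\]

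Next I would take the intersection $\Omega^\ast:=\bigcap_{k\in\N}\Omega_k^c$, which is still of full measure, and let $k\to\infty$. Since $\gamma_k\to 1^+$, hypothesis \eqref{lemma1_h} gives $R(\gamma_k)\to 1$, so for every $\omega\in\Omega^\ast$ with $f(\omega)$ finite we conclude $\lim_{N\to\infty}A_N(\omega)=f(\omega)$. For the set where $f(\omega)=+\infty$ (which may occur since the $f_n$ are only assumed non-negative), the lower bound $A_N(\omega)\ge R(\gamma_k)^{-1}\,A_{\lfloor\gamma_k^{j(N)}\rfloor}(\omega)$ still forces $\liminf_N A_N(\omega)=+\infty$, so the conclusion holds there as well.

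I do not expect a serious obstacle: the argument is a clean deterministic comparison once the null sets are handled by countable intersection, and the only subtlety is making sure the subsequence bounds are applied to the correct endpoints $\lfloor\gamma^{j}\rfloor$ and $\lfloor\gamma^{j+1}\rfloor$. The key structural ingredients are exactly the non-negativity of $f_n$ (for the sandwich) and the regularity condition \eqref{lemma1_h} on $W_N$ (to make the sandwich close up as $k\to\infty$); the rest is bookkeeping.
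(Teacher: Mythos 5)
Your argument is correct: the sandwich $\frac{W_{\lfloor\gamma^{j}\rfloor}}{W_{\lfloor\gamma^{j+1}\rfloor}}A_{\lfloor\gamma^{j}\rfloor}(\omega)\le A_N(\omega)\le \frac{W_{\lfloor\gamma^{j+1}\rfloor}}{W_{\lfloor\gamma^{j}\rfloor}}A_{\lfloor\gamma^{j+1}\rfloor}(\omega)$ is valid by non-negativity and monotonicity, and the countable intersection of null sets followed by $k\to\infty$ closes the gap via \eqref{lemma1_h}. The paper does not prove this lemma itself but cites \cite[Corollary A.2]{Frantzikinakis_Lesigne_Wierdl}, whose proof is exactly this standard comparison along the lacunary subsequence, so your approach matches.
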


The following notation will be used throughout the rest of this paper.
\begin{notation}
    Let $(a_n)_{n\in \N}$ and $(b_n)_{n\in \N}$ be positive real sequences. We write $a_n\ll b_n$ if and only if there exists $C>0$, independent of $n$, such that $a_n\leq Cb_n$ for all $n\in \N$. We also denote $a_n\asymp b_n$ if  $a_n\ll b_n$ and $b_n\ll a_n$.
\end{notation} 

The following is a version of the law of large numbers suitable for our context. The proof is inspired by the independent case established in \cite[Lemma A.6]{Frantzikinakis_Lesigne_Wierdl}. An analogous dynamical version of this result can be found in \cite[Theorem 2.3]{Viktoria_Xing2021}.

\begin{lemma}
Let $(\Omega,\mathcal{F},\mathbb{P})$ be a probability space and $(X_n)_{n\in \N}$ be a sequence of uniformly bounded and non-negative random variables defined on it. Suppose that there exist $a\in (0,1)$ and $c>0$ such that $\mathbb{E}_{\mathbb{P}}(X_n)=cn^{-a}$ and $\varepsilon>0$ such that
    \begin{equation*}
    \sum_{n=1}^N\sum_{m=n+1}^N\Cov(X_n,X_m)\ll N^{2-2a-\varepsilon}.
    \end{equation*}
    Then,
    \[\lim_{N\to \infty}\dfrac{1}{W_N}\sum_{n=1}^NX_n(\omega)=1\]
    for almost every $\omega\in \Omega$, where $W_N:=\sum_{n=1}^N \mathbb{E}_{\mathbb{P}}(X_n)$.\label{LLN}
\end{lemma}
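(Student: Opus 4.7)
The plan is a standard Chebyshev/Borel--Cantelli second-moment argument passed to a lacunary subsequence, combined with \cref{lemma1} to fill in the gaps. Write $S_N := \sum_{n=1}^{N} X_n$, and note first that
\[
W_N \;=\; \sum_{n=1}^N c n^{-a} \;\asymp\; N^{1-a},
\]
which is a standard estimate. Since the $X_n$ are uniformly bounded by some constant $M>0$ and non-negative, $\Var(X_n) \leq \mathbb{E}(X_n^2) \leq M\,\mathbb{E}(X_n)$, hence $\sum_{n=1}^N \Var(X_n) \ll W_N \ll N^{1-a}$. Combining with the covariance hypothesis,
\[
\Var(S_N) \;=\; \sum_{n=1}^N \Var(X_n) + 2\!\!\sum_{1\leq n<m\leq N}\!\!\Cov(X_n,X_m) \;\ll\; N^{1-a} + N^{2-2a-\varepsilon}.
\]
Dividing by $W_N^2 \asymp N^{2-2a}$ and using that $a\in(0,1)$ so $1-a>0$, Chebyshev's inequality yields, for every $\delta>0$,
\[
\mathbb{P}\!\left(\left|\tfrac{S_N}{W_N}-1\right|>\delta\right) \;\ll\; \delta^{-2}\bigl(N^{-(1-a)}+N^{-\varepsilon}\bigr).
\]

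This bound is not summable in $N$, so I would pass to a lacunary subsequence. Fix $\gamma>1$ and set $N_k:=\lfloor \gamma^k\rfloor$. Then $N_k^{-(1-a)}+N_k^{-\varepsilon}$ decays geometrically in $k$, so the Chebyshev bound is summable in $k$. By Borel--Cantelli, for every fixed rational $\delta>0$,
\[
\left|\tfrac{S_{N_k}}{W_{N_k}}-1\right|\leq \delta \quad \text{for all $k$ large enough, almost surely.}
\]
Intersecting over the countable collection of rational $\delta$, we obtain
\[
\lim_{k\to\infty}\frac{S_{N_k}}{W_{N_k}}=1 \quad \text{almost surely.}
\]

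To transfer convergence from the lacunary subsequence to every $N$, I apply \cref{lemma1} with $f_n=X_n$ and target limit $f\equiv 1$. Its hypothesis \eqref{lemma1_h} is verified because $W_N \asymp N^{1-a}$ gives
\[
\frac{W_{\lfloor \gamma^{k+1}\rfloor}}{W_{\lfloor \gamma^{k}\rfloor}} \xrightarrow[k\to\infty]{} \gamma^{1-a},
\]
and $\gamma^{1-a}\to 1$ as $\gamma\to 1^+$. Choosing a sequence $\gamma_j\to 1^+$ and applying the previous paragraph along $N_k^{(j)}=\lfloor \gamma_j^k\rfloor$ for each $j$, which requires only a countable intersection of full-measure events, \cref{lemma1} then delivers $S_N/W_N\to 1$ almost surely, as desired.

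The main obstacle is minor: one must check that the exponent $\varepsilon>0$ genuinely provides the power saving $N^{-\varepsilon}$ in the Chebyshev estimate (this is exactly why the hypothesis is stated with the $N^{2-2a-\varepsilon}$ bound rather than merely $o(N^{2-2a})$), so that geometric summability along $N_k=\lfloor\gamma^k\rfloor$ becomes automatic. Everything else is a routine execution of second-moment ideas, noting only that non-negativity of $X_n$ is used implicitly via \cref{lemma1} to sandwich partial sums between consecutive lacunary values.
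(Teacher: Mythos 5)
Your proposal is correct and follows essentially the same route as the paper: a second-moment (Chebyshev/Markov plus Borel--Cantelli) bound of the form $N^{-(1-a)}+N^{-\varepsilon}$ along lacunary subsequences $\lfloor\gamma_k^N\rfloor$, then \cref{lemma1} to upgrade to all $N$. The only cosmetic difference is that you apply \cref{lemma1} directly to the non-negative $f_n=X_n$ with limit $f\equiv 1$ (which matches its hypotheses cleanly), whereas the paper phrases everything in terms of the centered variables $Y_n$ and the target $f\equiv 0$.
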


\begin{proof}
Let $Y_n(\omega):=X_n(\omega)-\mathbb{E}_{\mathbb{P}}(X_n)$ and $\displaystyle A_N(\omega):=\dfrac{1}{W_N}\sum_{n=1}^N Y_n(\omega).$ It suffices to show that $A_N\to 0$ for almost every $\omega \in \Omega$. Note that
    \begin{equation}
    \mathbb{E}_{\mathbb{P}}(A_N^2)=\dfrac{1}{W_N^2}\sum_{1\leq n, m\leq N}\mathbb{E}_{\mathbb{P}}(Y_{n}Y_{m})=\dfrac{1}{W_N^2}\sum_{n=1}^{N}\mathbb{E}_{\mathbb{P}}(Y_n^2)+\dfrac{2}{W_N^2}\sum_{n=1}^{N}\sum_{m=n+1}^{N}\mathbb{E}_{\mathbb{P}}(Y_n Y_m).\label{eqn:LLN1}
\end{equation}
Since the $X_n$ are uniformly bounded, we have that $\mathbb{E}_{\mathbb{P}}(Y_n^2)=\mathbb{E}_{\mathbb{P}}(X_n^2)-\mathbb{E}_{\mathbb{P}}(X_n)^2\ll \mathbb{E}_{\mathbb{P}}(X_n).$ On the other hand,
$$\sum_{n=1}^N\sum_{m=n+1}^N\mathbb{E}_{\mathbb{P}}(Y_{n}Y_{m})=\sum_{n=1}^N\sum_{m=n+1}^N\Cov(X_{n},X_{m})\ll N^{2-2a-\varepsilon}.$$ 
Therefore,\begin{equation}
    \mathbb{E}_{\mathbb{P}}(A_N^2)\ll \dfrac{1}{W_N}+\dfrac{{N^{2-2a-\varepsilon}}}{W_N^2}.\label{eqn:LLN2}
\end{equation}
Since $\mathbb{E}_{\mathbb{P}}(X_n)=cn^{-a}$ and $\sum_{n=1}^N n^{-a}\asymp N^{1-a}$, we get $N^{1-a}\ll \sum_{n=1}^N\mathbb{E}_{\mathbb{P}}(X_n)=W_N$. We conclude from \eqref{eqn:LLN2} that
\begin{equation}
    \mathbb{E}_{\mathbb{P}}(A_N^2)\ll N^{-(1-a)}+N^{-\varepsilon}.\label{eqn:LLN3}
\end{equation}

For $k\in\N$, consider $\gamma_k:=1+1/k$. It follows from \eqref{eqn:LLN3} that for any $k\in \N$, 
\[\sum_{N\geq 1}\mathbb{E}_{\mathbb{P}}(A_{\lfloor\gamma_k^{N}\rfloor}^2)<+\infty.\]

Using Markov's inequality and the lemma of Borel-Cantelli, we conclude that for every $k\in \N$, 
\begin{equation}
    \lim_{N\to \infty}A_{\lfloor\gamma_k^{N}\rfloor}(\omega)=0\label{A_N}
\end{equation}
for almost every $\omega\in \Omega$. Notice that condition \eqref{lemma1_h} in \cref{lemma1} can be verified by noticing that $N^{-(1-a)} W_N $ has a positive limit as $N$ goes to infinity. \cref{lemma1} allows us to conclude by considering $(\gamma_k)_{k\in\N}$ and $f\equiv 0$.
\end{proof}

The following lemma, whose proof is elementary, is key to dealing with some obstacles that arise when trying to implement the method of Frantzikinakis, Lesigne, and Wierdl \cite{Frantzikinakis_Lesigne_Wierdl} in the non-independent case.

\begin{lemma} Let $(\Omega,\mathcal{F},\mathbb{P})$ be a probability space, and let $\mathcal{E}=(E_n)_{n\in \N}$ be a sequence of measurable sets in $\mathcal{F}$ of decreasing measure converging to zero and satisfying \eqref{P}. Let $X_n(\omega):=\mathbbm{1}_{E_n}(\omega)$, $\sigma_n:=\mathbb{E}_{\mathbb{P}}(X_n)$ and $Y_n(\omega):=X_n(\omega)-\sigma_n$. Then, for every $1\leq n_1\leq n_2\leq n_3\leq n_4$ such that $d:=n_2-n_1=n_4-n_3$, we can estimate the $4$-fold covariance as
\begin{align*}
    \mathbb{E}_{\mathbb{P}}\left( Y_{n_1} Y_{n_2} Y_{n_3} Y_{n_4} \right)&\ll  \sigma_{n_1}\rho(d)\Big(\sigma_{n_1}+\rho(n_3-n_2)\Big).
\end{align*}\label{key}
\end{lemma}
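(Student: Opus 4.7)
The plan is to expand the product $Y_{n_1}Y_{n_2}Y_{n_3}Y_{n_4}$ into a signed combination of joint probabilities of the $E_{n_i}$ and apply property \eqref{P} twice: once to peel off $E_{n_1}$, yielding the factor of $\rho(d)$, and once more, where necessary, to decouple $E_{n_2}\cap E_{n_3}$ and extract the $\rho(n_3-n_2)$ factor.

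First, writing $Y_{n_1}=X_{n_1}-\sigma_{n_1}$ and expanding $Y_{n_2}Y_{n_3}Y_{n_4}=\prod_{i=2}^{4}(X_{n_i}-\sigma_{n_i})$ as a signed sum over subsets $S\subseteq\{2,3,4\}$, one obtains
\[\E_{\P}\big[Y_{n_1}Y_{n_2}Y_{n_3}Y_{n_4}\big]=\sum_{S\subseteq\{2,3,4\}}(-1)^{3-|S|}\prod_{i\notin S}\sigma_{n_i}\cdot\Big(\P\big(E_{n_1}\cap\bigcap_{i\in S}E_{n_i}\big)-\sigma_{n_1}\,\P\big(\bigcap_{i\in S}E_{n_i}\big)\Big).\]
The $S=\emptyset$ contribution vanishes because $\P(E_{n_1})=\sigma_{n_1}$. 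For each nonempty $S$, property \eqref{P}, together with the monotonicity of $\rho$ and $n_{\min S}-n_1\geq n_2-n_1=d$, gives
\[\Big|\P\big(E_{n_1}\cap\bigcap_{i\in S}E_{n_i}\big)-\sigma_{n_1}\,\P\big(\bigcap_{i\in S}E_{n_i}\big)\Big|\leq \rho(d)\,\P\big(\bigcap_{i\in S}E_{n_i}\big).\]

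It then remains to show that for each nonempty $S\subseteq\{2,3,4\}$ one has
\[\prod_{i\notin S}\sigma_{n_i}\cdot\P\big(\bigcap_{i\in S}E_{n_i}\big)\ll\sigma_{n_1}\big(\sigma_{n_1}+\rho(n_3-n_2)\big).\]
If $\{2,3\}\not\subseteq S$, the trivial bound $\P\big(\bigcap_{i\in S}E_{n_i}\big)\leq\sigma_{n_{\min S}}\leq\sigma_{n_1}$ combined with the decreasing measures assumption $\sigma_{n_i}\leq\sigma_{n_1}$ produces a majorant of $\sigma_{n_1}^2$. If instead $\{2,3\}\subseteq S$, one bounds $\P\big(\bigcap_{i\in S}E_{n_i}\big)\leq\P(E_{n_2}\cap E_{n_3})$ and applies \eqref{P} a second time to obtain
\[\P(E_{n_2}\cap E_{n_3})\leq\sigma_{n_2}\sigma_{n_3}+\rho(n_3-n_2)\sigma_{n_3}\leq\sigma_{n_1}\big(\sigma_{n_1}+\rho(n_3-n_2)\big),\]
and the remaining $\sigma_{n_i}$ factors are bounded by $1$. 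Summing the seven nonzero contributions yields the stated estimate.

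The main obstacle is the careful case analysis: there are eight terms in the expansion, and for each one must verify that the appropriate bound on the inner joint probability produces a majorant no larger than $\sigma_{n_1}(\sigma_{n_1}+\rho(n_3-n_2))$. Both the monotonicity of $\rho$ and the decreasing-measure hypothesis are used repeatedly, together with two differently placed applications of \eqref{P}, to reduce every index appearing on the right-hand side to $n_1$.
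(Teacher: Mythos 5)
Your proof is correct and follows essentially the same route as the paper: the signed sum over subsets $S\subseteq\{2,3,4\}$ is exactly the paper's covariance decomposition in \eqref{split1}, and both arguments then apply \eqref{P} once to peel off $E_{n_1}$ (producing the $\rho(d)$ factor via monotonicity of $\rho$) and a second time to $E_{n_2}\cap E_{n_3}$ to extract $\sigma_{n_1}(\sigma_{n_1}+\rho(n_3-n_2))$. Your two-case dichotomy on whether $\{2,3\}\subseteq S$ is a slightly more uniform way of organizing the seven term-by-term estimates, but the underlying mechanism is identical.
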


\begin{proof}
  Let $1\leq n_1\leq n_2\leq n_3\leq n_4$ be integers such that $d:=n_4-n_3=n_2-n_1$. By expanding $\mathbb{E}_{\mathbb{P}}(Y_{n_1}Y_{n_2}Y_{n_3}Y_{n_4})$  and grouping elements together, we can write
\begin{equation}
\begin{split}
\mathbb{E}_{\mathbb{P}}\left( Y_{n_1} Y_{n_2} Y_{n_3} Y_{n_4} \right) &=\Cov(X_{n_1},X_{n_2}X_{n_3}X_{n_4})-\sigma_{n_4}\Cov(X_{n_1},X_{n_2}X_{n_3})\\
&\quad-\sigma_{n_3}\Cov(X_{n_1},X_{n_2}X_{n_4})+\sigma_{n_3}\sigma_{n_4}\Cov(X_{n_1},X_{n_2})\\
&\quad-\sigma_{n_2}\Cov(X_{n_1},X_{n_3}X_{n_4})+\sigma_{n_2}\sigma_{n_4}\Cov(X_{n_1},X_{n_3})\\
&\quad+\sigma_{n_2}\sigma_{n_3}\Cov(X_{n_1},X_{n_4}).
\end{split}  \label{split1} 
\end{equation}
Since $\Cov(X_{n_1},X_{n_2}X_{n_3}X_{n_4})=\mathbb{P}(E_{n_1}\cap E_{n_2}\cap E_{n_3}\cap E_{n_4})-\mathbb{P}(E_{n_1})\mathbb{P}(E_{n_2}\cap E_{n_3}\cap E_{n_4})$ and the collection $\mathcal{E}$ satisfies \eqref{P},
\begin{align*}
    |\Cov(X_{n_1},X_{n_2}X_{n_3}X_{n_4})|&\leq \rho(n_2-n_1)\mathbb{P}(E_{n_2}\cap E_{n_3}\cap E_{n_4})\\
      &=\rho(n_2-n_1)\Big(\Cov(X_{n_2},X_{n_3}X_{n_4})+\sigma_{n_2}\mathbb{P}(E_{n_3}\cap E_{n_4})\Big) \\
      &\leq \rho(n_2-n_1)\Big(\rho(n_3-n_2)\mathbb{P}(E_{n_3}\cap E_{n_4})+\sigma_{n_2}\mathbb{P}(E_{n_3}\cap E_{n_4})\Big) \\
      &\leq \rho(n_2-n_1)\Big(\rho(n_3-n_2)+\sigma_{n_2}\Big)\Big(\rho(n_4-n_3)+\sigma_{n_3}\Big)\sigma_{n_4}\\
      &\leq \rho(d)\Big(\rho(n_3-n_2)+\sigma_{n_1}\Big)\Big(\rho(d)+\sigma_{n_1}\Big)\sigma_{n_1},
\end{align*}
where in the last step we used that $(\sigma_n)_{n\in \N}$ is decreasing and $n_2-n_1=n_4-n_3=d$. Using that $\rho(n)$ and $\sigma_n$ are decreasing to 0, we conclude 
\[ |\Cov(X_{n_1},X_{n_2}X_{n_3}X_{n_4})|\ll \rho(d)\rho(n_3-n_2)\sigma_{n_1}+\rho(d)\sigma_{n_1}^2.\]Similarly, we obtain
\begin{equation}
\begin{split}
    &|\sigma_{n_4}\Cov(X_{n_1},X_{n_2}X_{n_3})|\leq \rho(d)\Big(\rho(n_3-n_2)+\sigma_{n_1}\Big)\sigma_{n_1}^2,\\
    &|\sigma_{n_3}\Cov(X_{n_1},X_{n_2}X_{n_4})|\leq \rho(d)\Big(\rho(n_4-n_2)+\sigma_{n_1}\Big)\sigma_{n_1}^2, \text{ and }\\
    &|\sigma_{n_2}\Cov(X_{n_1},X_{n_3}X_{n_4})|\leq \rho(n_3-n_1)\Big(\rho(d)+\sigma_{n_1}\Big)\sigma_{n_1}^2.\label{split3} 
\end{split}
\end{equation}
As $\rho$ is decreasing, each of the terms in \eqref{split3} can be bounded above by $$\rho(d)\rho(n_3-n_2)\sigma_{n_1}^2+\rho(d)\sigma_{n_1}^3.$$
Also, note that
\begin{equation}
\begin{split}
    &|\sigma_{n_3}\sigma_{n_4}\Cov(X_{n_1},X_{n_2})|\leq \rho(d)\sigma_{n_1}^3,\\
    &|\sigma_{n_2}\sigma_{n_4}\Cov(X_{n_1},X_{n_3})|\leq \rho(n_3-n_1)\sigma_{n_1}^3,\text{ and }\\
    &|\sigma_{n_2}\sigma_{n_3}\Cov(X_{n_1},X_{n_4})|\leq \rho(n_4-n_1)\sigma_{n_1}^3. \label{split4} 
\end{split}
\end{equation}
Hence, each term in \eqref{split4} can be bounded above by $\rho(d)\sigma_{n_1}^3.$
Replacing all the estimations in \eqref{split1}, it follows that
\begin{align*}
    \mathbb{E}_{\mathbb{P}}\left( Y_{n_1} Y_{n_2} Y_{n_3} Y_{n_4} \right)&\ll \rho(d)\rho(n_3-n_2)\sigma_{n_1}+\rho(d)\sigma_{n_1}^2\\
    &+\rho(d)\rho(n_3-n_2)\sigma_{n_1}^2+\rho(d)\sigma_{n_1}^3+\rho(d)\sigma_{n_1}^3\\
    &\ll \rho(d)\rho(n_3-n_2)\sigma_{n_1}+\rho(d)\sigma_{n_1}^2.
\end{align*}\end{proof}

\section{Proof of the main result and consequences}
The following is our main result. In this section, we will prove it, and as a consequence, we will derive \cref{thmA}. 

\begin{theorem}
    Let $(\Omega,\mathcal{F},\mathbb{P})$ be a probability space and $(E_n)_{n\in \N}\subseteq \mathcal{F}$ be a sequence satisfying \eqref{P} with rate $\rho$ such that $\sum_{n\in\N}\rho(n)<+\infty$, and suppose there exist $a\in (0,1/2)$ and $c>0$ such that $\mathbb{P}(E_n)=cn^{-a}$ for all $n\in\N$. For $\omega\in \Omega$, we define  $\kappa_1(\omega):=\inf\{k\in \N:\ \omega\in E_k\}$ and for all $n>1$,
    \[\kappa_n(\omega):=\inf\left\{k>\kappa_{n-1}(\omega): \omega\in E_{k} \right\}.\]
    Then, for almost every $\omega\in \Omega$, the following holds: For every measure-preserving system $(X,\mathcal{X},\mu,T)$ and $f\in L^{2}(\mu)$,
    \begin{equation}
        \lim_{N\to\infty}\dfrac{1}{N}\sum_{n=1}^Nf(T^{\kappa_n(\omega)}x)=\mathbb{E}_{\mu}(f|\mathcal{I}(T))(x)\label{teo_eq}
    \end{equation}
   for almost every $x\in X$. That is, for almost every $\omega\in \Omega$, the sequence $(\kappa_n(\omega))_{n\in \N}$ is pointwise universally $L^2$-good and ergodic.\label{main} 
\end{theorem}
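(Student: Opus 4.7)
The plan is to follow the spectral-theoretic strategy of Frantzikinakis--Lesigne--Wierdl \cite{Frantzikinakis_Lesigne_Wierdl}, using the controlled dependence provided by property \eqref{P} and Lemma \ref{key} in place of independence.

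First I would recast the return-time average as a weighted Birkhoff average. Let $X_n(\omega)=\one_{E_n}(\omega)$, $\sigma_n=\P(E_n)=cn^{-a}$, and $W_N=\sum_{n\le N}\sigma_n\asymp N^{1-a}$. Writing $\frac{1}{N}\sum_{n\le N}f(T^{\kappa_n(\omega)}x)=\frac{1}{N}\sum_{k\le\kappa_N(\omega)}X_k(\omega)f(T^k x)$, property \eqref{P} yields $|\Cov(X_n,X_m)|\le\rho(|m-n|)\sigma_{\max(n,m)}$, so that $\sum_{n<m\le N}\Cov(X_n,X_m)\ll N^{1-a}\sum_d\rho(d)\ll N^{2-2a-\varepsilon}$ for any $\varepsilon<1-a$. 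Lemma \ref{LLN} then gives $W_N^{-1}\sum_{n\le N}X_n(\omega)\to 1$ a.s., whence $N/W_{\kappa_N(\omega)}\to 1$ a.s. This reduces the theorem to proving that, for a.e.\ $\omega$, every m.p.s.\ $(X,\CX,\mu,T)$ and every $f\in L^2(\mu)$,
\[
S_N^\omega f(x):=\frac{1}{W_N}\sum_{n=1}^N X_n(\omega)\,f(T^nx)\;\longrightarrow\;\E_\mu\bigl(f\mid\CI(T)\bigr)(x)\quad\text{$\mu$-a.e.\ in }x.
\]
Splitting $f=\E_\mu(f\mid\CI(T))+f_0$ with $\E_\mu(f_0\mid\CI(T))=0$, the invariant part handles itself by the LLN above, leaving the task of proving $S_N^\omega f_0\to 0$ a.e. Since $W_{\lfloor\gamma^{n+1}\rfloor}/W_{\lfloor\gamma^n\rfloor}\to\gamma^{1-a}$, the hypothesis of Lemma \ref{lemma1} is satisfied, and a standard lacunary reduction (using Lemma \ref{lemma1} on the non-negative weights $X_n|f_0|\circ T^n$ to obtain a maximal-gap estimate, then interpolating signed averages between consecutive lacunary times) reduces matters to a.e.\ convergence of $S_{N_m}^\omega f_0$ along every lacunary subsequence $N_m=\lfloor\gamma^m\rfloor$.

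For the spectral step, introduce $Q_N(\omega,t)=W_N^{-1}\sum_{n\le N}X_n(\omega)e^{2\pi int}$ on $\T$; the spectral theorem for the Koopman operator gives $\|S_N^\omega f_0\|_{L^2(\mu)}^2=\int_\T|Q_N(\omega,t)|^2\,d\sigma_{f_0}(t)$, with $\sigma_{f_0}(\{0\})=0$. Decompose $Q_N=M_N+\wt Q_N$, where $M_N(t)=W_N^{-1}\sum_n\sigma_ne^{2\pi int}$ is deterministic, satisfies $|M_N|\le 1$, and $M_N(t)\to 0$ for $t\neq 0$ by Abel summation against $\sigma_n=cn^{-a}$; and $\wt Q_N(\omega,t)=W_N^{-1}\sum_n(X_n(\omega)-\sigma_n)e^{2\pi int}$ is the centred part. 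The crucial estimate is the $L^4$-in-$t$ bound
\[
\E_\P\!\int_0^1\!|\wt Q_N(\cdot,t)|^4\,dt\;\ll\;N^{2a-2},
\]
valid for $a<1/2$. The integral collapses onto the constraint $n_1-n_2+n_3-n_4=0$; after sorting the quadruple in non-decreasing order, the only non-diagonal contribution comes from the sign pattern $(+,-,-,+)$, forcing $n_2-n_1=n_4-n_3$, which is precisely the hypothesis of Lemma \ref{key}. Applying that lemma and summing over $(n_1,d,g)$ with $d=n_2-n_1=n_4-n_3$, $g=n_3-n_2$, the ``pair'' contribution $\sigma_{n_1}^2\rho(d)$ sums to $O\bigl(N^{2-2a}\sum_d\rho(d)\bigr)$ and the ``cross'' contribution $\sigma_{n_1}\rho(d)\rho(g)$ sums to $O\bigl(W_N(\sum_d\rho(d))^2\bigr)$; dividing by $W_N^4\asymp N^{4-4a}$ yields the claim. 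A Bernstein-type inequality for trigonometric polynomials of degree $\le N$ (giving $\sup_t|p|^4\ll N\int_0^1|p(t)|^4dt$) then upgrades this to $\E_\P\sup_t|\wt Q_N(\cdot,t)|^4\ll N^{2a-1}$, which is summable along $N_m=\lfloor\gamma^m\rfloor$ precisely because $a<1/2$. Borel--Cantelli gives $\sup_t|\wt Q_{N_m}(\omega,t)|\to 0$ for a.e.\ $\omega$ on a full-measure set independent of the system. Combined with $M_{N_m}(t)\to 0$ for every $t\neq 0$ and the uniform bound $|Q_N|\le 2$, dominated convergence against $\sigma_{f_0}$ yields $\|S_{N_m}^\omega f_0\|_{L^2(\mu)}\to 0$; a standard pointwise maximal inequality (the weights $X_n\in\{0,1\}$ make $\sup_N|S_N^\omega f_0|$ dominated by the Birkhoff maximal function of $|f_0|$) upgrades this to $\mu$-a.e.\ convergence in $x$, concluding the proof.

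\emph{Main obstacle.} The heart of the argument is the $L^4$ bound on $\wt Q_N$. In the independent case of \cite{Frantzikinakis_Lesigne_Wierdl}, the fourth moment decouples into products of pairwise expectations; here, one must sort the quadruple and invoke Lemma \ref{key}, whose bound contains a new cross term $\rho(d)\rho(n_3-n_2)\sigma_{n_1}$ with no independent analogue. The summability of $\rho$ is exactly what controls this new term, while the hypothesis $a<1/2$ enters twice: to ensure that $\sum\sigma_{n_1}^2=O(N^{1-2a})$ is of the right size, and to make the final bound $N^{2a-1}$ summable along lacunary subsequences, which is what drives Borel--Cantelli.
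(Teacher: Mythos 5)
Your architecture is genuinely different from the paper's. The paper applies the van der Corput inequality (\cref{vdc}) directly to the vectors $Y_n(\omega)\,T^nf$ in $L^2(\mu)$ and, via \cref{key}, obtains a power-saving bound $\E_{\P}\bigl\|W_N^{-1}\sum_{n\le N}Y_n\,T^nf\bigr\|_{L^2(\mu)}^2\ll N^{-\varepsilon'}$, which is summable along lacunary times and therefore yields a.e.\ convergence in $\omega$ \emph{and} in $x$ directly by Borel--Cantelli and monotone convergence. You instead work on the Fourier side with the random trigonometric polynomials $\wt Q_N$, prove a fourth-moment bound, and upgrade it by Nikolskii's inequality to a uniform estimate in $t$. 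Your moment computation is correct: the constraint $n_1+n_3=n_2+n_4$ does reduce, after sorting, to $n_2-n_1=n_4-n_3$, \cref{key} applies, and the contributions $\sigma_{n_1}^2\rho(d)$ and $\sigma_{n_1}\rho(d)\rho(g)$ sum to $O(N^{2-2a})$ and $O(N^{1-a})$, giving $\E_{\P}\sup_t|\wt Q_N(\cdot,t)|^4\ll N^{2a-1}$. This route is arguably cleaner on the universality issue, since the key estimate involves no dynamics at all; the reduction steps (LLN, lacunary reduction via \cref{lemma1}) coincide with the paper's.

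The gap is in the very last step, the passage from $\|S_{N_m}^\omega f_0\|_{L^2(\mu)}\to 0$ to $\mu$-a.e.\ convergence. The claim that $\sup_N|S_N^\omega f_0|$ is dominated by the Birkhoff maximal function of $|f_0|$ because $X_n\in\{0,1\}$ is false: the crude bound gives $|S_N^\omega f_0(x)|\le \tfrac{N}{W_N}\cdot\tfrac1N\sum_{n\le N}|f_0|(T^nx)$ with $N/W_N\asymp N^a\to\infty$, and the honest comparison is with averages of $|f_0|$ along the sparse set $\{\kappa_1(\omega),\kappa_2(\omega),\dots\}$ --- a maximal inequality for exactly the object you are trying to control. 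Moreover, even granting a maximal inequality, norm convergence does not imply a.e.\ convergence; the Banach principle additionally requires a.e.\ convergence on a dense subclass. Finally, your dominated-convergence step produces no rate (the speed of $\int|Q_{N_m}|^2\,d\sigma_{f_0}\to 0$ depends on how $\sigma_{f_0}$ concentrates near $t=0$), so you cannot recover summability of $\|S_{N_m}^\omega f_0\|_{L^2(\mu)}^2$ and run Borel--Cantelli in $x$. The repair is to not feed $M_N$ and $\wt Q_N$ jointly into the spectral measure: handle $W_N^{-1}\sum_n\sigma_nf_0(T^nx)$ by direct comparison with Birkhoff averages (summation by parts, or \cite[Lemma A.3]{Frantzikinakis_Lesigne_Wierdl}, using that $\sigma_n=cn^{-a}$ is monotone), which converges a.e.\ to $\E_\mu(f_0|\CI(T))=0$; and for the centred part use that $\E_{\P}\sup_t|\wt Q_{N_m}|^2\le(\E_{\P}\sup_t|\wt Q_{N_m}|^4)^{1/2}\ll N_m^{a-1/2}$ is summable in $m$ when $a<1/2$, so that almost surely $\sum_m\bigl\|W_{N_m}^{-1}\sum_nY_nf_0\circ T^n\bigr\|_{L^2(\mu)}^2\le\|f_0\|_{L^2(\mu)}^2\sum_m\sup_t|\wt Q_{N_m}(\omega,t)|^2<\infty$, whence monotone convergence gives a.e.\ convergence to $0$ in $x$. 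With that modification your spectral route closes and gives an alternative proof.
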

It is important to note that the definition of the sequence $\kappa_n(\omega)$ is purely probabilistic and does not involve any dynamics.  For clarity, in the statement of the theorem and its proof, we use a subscript on the symbol $\E$ to indicate the probability space for which we are computing the expectation or conditional expectation.

To prove \cref{main}, we need the following version of the classical Van der Corput lemma, as stated in \cite[Lemma A.4]{Frantzikinakis_Lesigne_Wierdl}.

\begin{lemma}[Van der Corput] Let $V$ be a space with inner product $\langle\cdot, \cdot \rangle$, $N\in \mathbb{N}$ and $v_1,...,v_N\in V$. Then, for all integers $M$ between 1 and N we have
\[\left\| \sum_{n=1}^{N} v_n\right\|^2\leq 2M^{-1}N \cdot \sum_{n=1}^{N}\|v_n\|^2+4M^{-1}N\sum_{m=1}^{M}\left |\sum_{n=1}^{N-m} \langle v_{n+m},v_n \rangle\right |,\]
where the norm is the one induced by the inner product.
\label{vdc}
\end{lemma}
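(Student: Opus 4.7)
The plan is to use the classical averaging-plus-Cauchy-Schwarz trick. First I would extend the sequence by setting $v_n := 0$ for $n \notin \{1,\dots,N\}$, and then rewrite $M \sum_{n=1}^N v_n$ as a double sum over shifted blocks of length $M$. Concretely, a direct coefficient count shows
\[
M \sum_{n=1}^{N} v_n \;=\; \sum_{n=2-M}^{N} \sum_{m=1}^{M} v_{n+m-1},
\]
because each $v_k$ with $1 \le k \le N$ is picked up by exactly $M$ pairs $(n,m)$, and the zero extension handles the boundary indices. The outer sum on the right has $N+M-1 \le 2N$ terms (this is where $M \le N$ enters).

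Next I would apply the Cauchy-Schwarz inequality in $V$ to this outer sum: writing $w_n := \sum_{m=1}^M v_{n+m-1}$, one gets
\[
M^2 \left\| \sum_{n=1}^N v_n \right\|^2 \;\le\; (N+M-1) \sum_{n=2-M}^{N} \|w_n\|^2 \;\le\; 2N \sum_{n=2-M}^{N} \|w_n\|^2.
\]
Then I would expand each $\|w_n\|^2$ into its diagonal and off-diagonal contributions,
\[
\|w_n\|^2 \;=\; \sum_{m=1}^{M} \|v_{n+m-1}\|^2 \;+\; 2\,\operatorname{Re}\!\!\sum_{1 \le m < m' \le M} \langle v_{n+m-1}, v_{n+m'-1}\rangle,
\]
and sum over $n$ from $2-M$ to $N$. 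For the diagonal terms, swapping the order of summation and using that $v_k$ is supported on $\{1,\dots,N\}$ gives exactly $M \sum_{k=1}^{N} \|v_k\|^2$, producing the first term on the right-hand side after dividing by $M^2$.

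For the off-diagonal terms I would set $d := m' - m \in \{1,\dots,M-1\}$, so the double sum collapses: for each fixed $d$, after changing variables $k := n+m-1$ and using the zero extension to restrict $k$ to $\{1,\dots,N-d\}$, the number of $(m,m')$ pairs with $m'-m = d$ contributes a factor $(M-d) \le M$. Bounding $\operatorname{Re}(\cdot)$ by absolute value and using $(M-d)/M \le 1$ then yields
\[
\frac{4N}{M^2} \sum_{d=1}^{M-1} (M-d) \left| \sum_{k=1}^{N-d} \langle v_k, v_{k+d}\rangle \right| \;\le\; \frac{4N}{M} \sum_{d=1}^{M} \left| \sum_{k=1}^{N-d} \langle v_{k+d}, v_k\rangle \right|,
\]
where I have harmlessly extended the summation from $M-1$ to $M$ (the added term is non-negative). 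Combining this with the diagonal bound gives exactly the stated inequality. The only delicate point is the combinatorial identity in the first display and the careful bookkeeping of how the zero extension restricts the $k$-ranges in the off-diagonal sums; everything else is Cauchy-Schwarz and triangle inequality.
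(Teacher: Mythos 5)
Your proof is correct: the shift-averaging identity $M\sum_{n=1}^N v_n=\sum_{n=2-M}^{N}\sum_{m=1}^{M}v_{n+m-1}$ holds (each $v_k$ with $1\le k\le N$ is hit exactly $M$ times), the Cauchy--Schwarz step with $N+M-1\le 2N$ terms is valid, and the diagonal/off-diagonal bookkeeping reproduces the constants $2M^{-1}N$ and $4M^{-1}N$ exactly. The paper itself gives no proof of this lemma --- it quotes it from Frantzikinakis--Lesigne--Wierdl (Lemma A.4) --- and your argument is precisely the standard proof underlying that reference, so there is nothing to reconcile.
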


\begin{proof}[Proof of \cref{main}] 
Let $X_n:=\mathbbm{1}_{E_n}$, $\sigma_n=\mathbb{E}_{\P}(X_n)(=\mathbb{P}(E_n))$, $\mathbf{W_N}(\omega):=\sum_{n=1}^N X_n(\omega)$ and $W_N:=\sum_{n=1}^N \sigma_n$. The following simple estimates will be used repeatedly:
 $$\sum_{n=1}^N\sigma_n\ll N^{1-a} \text{ and }  \sum_{n=1}^N\sigma_n^2\ll N^{1-2a}.$$

Recalling property \eqref{P} and using that $\rho(n)$ is summable, we get that
    \[\sum_{n=1}^N\sum_{m=n+1}^N\Cov(X_n,X_m)\ll \sum_{n=1}^N\sum_{m=n+1}^N \rho(m-n)\sigma_m\ll N^{1-a}= N^{2-2a-\delta},\]
    for $\delta:=1-a>0$. Thus, from \cref{LLN},

\begin{equation}
    \displaystyle\lim_{N\to \infty} \mathbf{W_N
}(\omega)/W_N=1\text{ for almost every $\omega\in\Omega$}.\label{AS_1}
\end{equation} This shows that the sequence $(\kappa_n(\omega))_{n\in\N}$ is well-defined for a.e. $\omega\in \Omega$. From now, we restrict ourselves to $\omega\in \Omega$ where \eqref{AS_1} holds. Let $(X,\mathcal{X},\mu,T)$ be a m.p.s. and $f\in L^2(\mu)$. By definition of $\kappa_n(\omega)$, it is clear that the sequences
\begin{equation} 
      \dfrac{1}{N}\sum_{n=1}^Nf(T^{\kappa_n(\omega)}x)\label{reduction_11}
  \end{equation}
  and    
    \begin{equation} 
      \dfrac{1}{\mathbf{W_N(\omega)}}\sum_{n=1}^NX_n(\omega)f(T^nx)\label{reduction_1}
  \end{equation} 
have exactly the same asymptotic behavior (meaning that the limit of  \eqref{reduction_11} exists if and only if the limit of \eqref{reduction_1} exists, and in such a case, the limits coincide).
Therefore, using \eqref{AS_1}, we can reduce the problem to show that \begin{equation}
    \lim_{N\to\infty}\dfrac{1}{W_N}\sum_{n=1}^NX_n(\omega)f(T^nx)=\mathbb{E}_{\mu}(f|\mathcal{I}(T))(x)\label{eqn:main1}
  \end{equation}
  for almost every $x\in X$. On the other hand, a direct application of \cite[Lemma A.3]{Frantzikinakis_Lesigne_Wierdl} allows us to deduce that the asymptotic behavior of
  
  \[\dfrac{1}{W_N}\sum_{n=1}^N\sigma_nf(T^nx)\quad \text{and }\quad \dfrac{1}{N}\sum_{n=1}^Nf(T^nx)\]
  is the same. Thus, as a consequence of the Birkhoff ergodic theorem, it only remains to prove that
  \begin{equation}
      \lim_{N\to\infty}\dfrac{1}{W_N}\sum_{n=1}^N(X_n(\omega)-\sigma_n)f(T^nx)=0 \quad \text{ for almost every $x\in X$.}\label{eqn:main2}
  \end{equation}
  
  Let $Y_n(\omega):=X_n(\omega)-\sigma_n
   $ and $b:=2a+\varepsilon$, where $\varepsilon\in (0,1-2a)$. Applying \cref{vdc} with $M=\left \lfloor N^b \right\rfloor $ and $v_n=Y_n(\omega)T^nf$, we get 
    \begin{equation*}
          \begin{split}
          V_N:=\left\|N^{-1+a}\sum_{n=1}^N Y_n(\omega)\cdot T^{n}f \right\|^2_{L^2(\mu)}&\ll N^{-1+2a} M^{-1}\sum_{n=1}^{N}\| Y_n(\omega)\cdot T^{n}f\|_{L^2(\mu)}^2+V_{1,N},
    \end{split}
    \end{equation*}
where \[V_{1,N}:=N^{-1+2a}M^{-1}\sum_{m=1}^{M}\left|\sum_{n=1}^{N-m}\int_{X} Y_{n+m}(\omega)\cdot Y_n(\omega)\cdot T^{n+m}f\cdot T^n f\text{d}\mu\right|.\]

Notice that $$\sum_{n=1}^{N}\| Y_n(\omega)\cdot T^{n}f\|_{L^2(\mu)}^2\leq \|f\|^2_{L^2(\mu)}\sum_{n=1}^NY^2_n(\omega).$$ Since $Y_n^2\leq X_n+\sigma_n^2$, we have  that $\sum_{n=1}^N \mathbb{E}_{\mathbb{P}}(Y^2_n)\ll N^{1-a}$. Thus, \begin{equation}
    \mathbb{E}_{\mathbb{P}}(V_N)\ll N^{a-b}+\mathbb{E}_{\mathbb{P}}(V_{1,N}).\label{eqn:main3}
\end{equation}

Composing with $T^{-n}$ and using Cauchy-Schwarz, we obtain that
\begin{equation}
    V_{1,N}\ll N^{-1+2a}M^{-1}\sum_{m=1}^{M}\left|\sum_{n=1}^{N-m}Y_{n+m}Y_n\right|.\label{eqn:main4}
\end{equation}

Note that by convexity (of $x\mapsto x^2$) and Jensen's inequality, \begin{align*}
\E_{\P}(V_{1,N})^2&\ll N^{4a-2}\Big (\E_{\P} \Big(\frac{1}{M}\sum_{m=1}^M \Bigl|\sum_{n=1}^{N-m} Y_{n+m}Y_{n} \Bigr |\Big)\Big)^2\\ 
&\ll N^{4a-b-2} \E_{\P} \Big(\sum_{m=1}^M \Bigl|\sum_{n=1}^{N-m} Y_{n+m}Y_{n} \Bigr |^2 \Big).
\end{align*}

Expanding the square in the last expression, we get that $\E_{\P}(V_{1,N})^2$ can be bounded by the sum of the terms 

\begin{equation}
    N^{4a-b-2}\sum_{m=1}^M \sum_{n=1}^{N-m}  \E_{\P} (Y_n^2Y_{n+m}^2)
\label{eq:term1}\end{equation}
and 
\begin{equation}
N^{4a-b-2}\sum_{m=1}^M \sum_{n_1\neq n_2}^{N-m} \mathbb{E}_{\P}(Y_{n_1}Y_{n_1+m}Y_{n_2}Y_{n_2+m}). \label{eq:term2}
\end{equation}

Firstly, recalling that the variables $X_n$ take values in $\{0,1\}$, note that 
 \begin{align*}
     Y_n^2Y_{n+m}^2&=X_{n}X_{n+m}-2X_nX_{n+m}\sigma_{n+m}+X_n\sigma_{n+m}^2\\
     &-2\sigma_nX_nX_{n+m}+4X_nX_{n+m}\sigma_n\sigma_{n+m}-2X_n\sigma_n\sigma_{n+m}^2\\
     &+X_{n+m}\sigma_n^2-2\sigma_{n}^2\sigma_{n+m}X_{n+m}+\sigma_{n}^2\sigma_{n+m}^2\\
     &\ll X_nX_{n+m}(1-2\sigma_{n+m}-2\sigma_n+4\sigma_{n}\sigma_{n+m})+\sigma_n^2.
 \end{align*}
It follows from the previous inequality and  property \eqref{P} that \eqref{eq:term1} is bounded by a constant times 
\begin{align*}
    &N^{4a-b-2}\sum_{m=1}^M \sum_{n=1}^{N-m} \Big(\mathbb{E}_{\mathbb{P}}(X_nX_{n+m})+\sigma_n^2\Big)\\
    \ll & N^{4a-b-2}\sum_{m=1}^M \sum_{n=1}^{N-m}\Big((\rho(m)+\sigma_n)\sigma_{n+m}+\sigma_n^2\Big)\\
    \ll & N^{4a-b-2}\Big(\sum_{m=1}^M \rho(m)\sum_{n=1}^{N-m}\sigma_{n+m}+\sum_{m=1}^M\sum_{n=1}^{N-m}2\sigma_n^2 \Big)\\ 
    \ll&   N^{4a-b-2} N^{1-2a+b}= N^{2a-1}. 
\end{align*}
Using \cref{key} and distinguishing between the cases $n_1\leq n_1+m\leq n_2\leq n_2+m$ and $n_1\leq n_2\leq n_1+m\leq n_2+m$, we can bound
\[N^{4a-b-2}\sum_{m=1}^M \sum_{n_1\neq n_2}^{N-m} \mathbb{E}_{\P}(Y_{n_1}Y_{n_1+m}Y_{n_2}Y_{n_2+m})\ll \alpha_{1,N}+\alpha_{2,N},\] where
\begin{align*}
\alpha_{1,N}&:=N^{4a-b-2} \sum_{m=1}^M \sum_{n_1=1}^{N-m}\sum_{n_2=n_1+m+1}^{N-m}\sigma_{n_1}\rho(m)\Big(\sigma_{n_1}+\rho(n_2-n_1-m)\Big),\\
\alpha_{2,N}&:=N^{4a-b-2} \sum_{m=1}^M\sum_{n_1=1}^{N-m}\sum_{n_2=n_1+1}^{n_1+m}\sigma_{n_1}\rho(n_2-n_1)\Big(\sigma_{n_1}+\rho(n_1+m-n_2)\Big).
\end{align*}

Since $\rho$ is summable, we can bound
\[ \alpha_{1,N} \ll N^{4a-2-b}\Big(N^{2-2a}+N^{1-a}\Big)\ll N^{2a-b}.\]
On the other hand, notice that 
 \[N^{4a-b-2} \sum_{m=1}^M\sum_{n_1=1}^{N} \sigma_{n_1} \sum_{t=1}^{m} \rho(t)\rho(m-t)  \ll N^{4a-b-2} N^{1-a} = N^{3a-b-1},\]
 where we have used that $\sum_{m=1}^M \sum_{t=1}^{m} \rho(t)\rho(m-t)=\sum_{t=1}^M\rho(t)\sum_{m=t}^M \rho(m-t)\ll 1$.
 Finally, we have
\[ N^{4a-b-2} \sum_{m=1}^M\sum_{n_1=1}^{N-m}\sigma_{n_1}^2\sum_{n_2=n_1+1}^{n_1+m} \rho(n_2-n_1)\ll N^{4a-b-2}N^{1-2a+b}=N^{2a-1}.\]
Summing up, 

\begin{equation}
\begin{split}
     \E_{\P}(V_N)^2&\ll N^{2(a-b)}+N^{2a-1}+N^{2a-b}+N^{3a-b-1}+N^{2a-1}\\
    &= N^{-2(a+\varepsilon)}+2N^{2a-1}+N^{-\varepsilon}+N^{a-\varepsilon-1}\\
    &\ll N^{-\varepsilon}.\label{final_estimation}
\end{split}   
\end{equation}

It follows that for $\gamma_k:=1+1/k$, $$\sum_{N=1}^\infty \mathbb{E}_{\mathbb{P}}(V_{\lfloor\gamma_k^N\rfloor})<+\infty.$$ 
Therefore, for every $k\in \N$, there is a set of full measure $\Omega_k\subseteq \Omega$ (where \eqref{AS_1} also holds) such that
$$\sum_{N=1}^\infty V_{\lfloor\gamma_k^N\rfloor}=\sum_{N=1}^\infty\left\|\lfloor\gamma_k^N\rfloor^{-1+a}\sum_{n=1}^{\lfloor\gamma_k^N\rfloor} Y_n(\omega)\cdot T^{n}f \right\|^2_{L^2(\mu)}<+\infty \text{ for almost every $\omega\in \Omega_k$}.$$

It is important to note that the notion of almost surely in the above estimates does not depend on the dynamical system $(X,\mathcal{X},\mu,T)$. From a standard application of the Markov inequality and Borel-Cantelli we obtain that for every $k\in \N$ and $\omega\in \Omega':=\bigcap_{k\in\N} \Omega_k$,
\[\lim_{N\to \infty}\dfrac{1}{W_{\lfloor\gamma_k^N\rfloor}}\sum_{n=1}^{\lfloor\gamma_k^N\rfloor} Y_n(\omega)\cdot f(T^{n}x)=0 \]
for almost every $x\in X$. An application of \cref{lemma1} completes the proof.
\end{proof}

\begin{remark}
The summable condition on $\rho$  can be replaced by $\rho$ such that for all $\varepsilon>0$, $\sum_{n=1}^N\rho(n)/N^\varepsilon\to 0$ as $N\to \infty$. This includes, for example, $\rho(n)=1/n$.
\end{remark}
Notice that the sequence $(\kappa_{n}(\omega))_{n\in\N}$ in \cref{main} is equivalent to 
the random sequence of integers $(a_n(\omega))_{n\in\N}$ generated by the random variables $X_n(\omega):=\mathbbm{1}_{E_n}(\omega)$ when the sets $(E_n)_{n\in \N}$ are independent. Hence, the previous result can be understood as an extension of \cref{thm:random_ergodic_averages} for non-independent sequences, but restricted to $a\in (0,1/2)$ and $p=2$. In this direction, it is natural to ask the following:

\begin{question} Does \cref{main} hold for $a\in (0,1)$ and $p\in (1,2]$?    
\end{question}

Similarly, we could ask if the sequence $(\kappa_n(\omega))_{n\in\N}$ of \cref{thm:random_ergodic_averages} is pointwise universally $L^1$-good. Notice that, for $a=1/2$, the random sequence can be understood as a random version of the sequence $n^2$. It is known that the sequence $n^2$ is not $L^1$-good for pointwise convergence (see \cite{Buczolich_Mauldin, Buczolich_Mauldin_2}). On the other hand, in the independent framework, the random sequence is universally $L^1$-good for $a\in (0,1/2)$ \cite{LaVictoire}. This provides positive evidence for the next question. 

\begin{question}
    Let $a\in (0,1/2)$. Under the conditions of \cref{main}, is the sequence $(\kappa_n(\omega))_{n\in\N}$ pointwise universally $L^1$-good for almost every $\omega$?
\end{question}

Now we focus on retrieving \cref{thmA} as a consequence of \cref{main}.

\begin{proof}[Proof of \cref{thmA}]
    For every $n\in \N$, we define $E'_n:=S^{-n}E_n$. Note that $\nu(E'_n)=\nu(E_n)=cn^{-a}$. Now, we show that $(E'_n)_{n\in\N}$ satisfies condition \eqref{P} with rate $\rho$. 
    
    Let $1\leq n_1<\cdots<n_k$. Note that
    \begin{equation*}
    \begin{split}
             \nu(E'_{n_1}\cap \cdots\cap E'_{n_k})=\nu\Big(E_{n_1}\cap S^{-(n_2-n_1)}(E_{n_2}\cap S^{-(n_3-n_2)}E_{n_3}\cap\cdots\cap S^{-(n_k-n_2)}E_{n_k})\Big).
    \end{split}
    \end{equation*}
    By denoting $A:=E_{n_2}\cap S^{-(n_3-n_2)}E_{n_3}\cap\cdots\cap S^{-(n_k-n_2)}E_{n_k}$ and $g:=\mathbbm{1}_{A
    }$, we can write
    \begin{equation}
        \nu(E'_{n_1}\cap \cdots\cap E'_{n_k})=\int_Y \mathbbm{1}_{E_{n_1}}\cdot g\circ S^{n_2-n_1}d\nu.\label{eqn:cor_1}
    \end{equation}
    Also, notice that \[\int_Y gd\nu=\int_Y g\circ S^{n_2}d\nu=\nu(S^{-n_2}E_{n_2}\cap S^{-n_3}E_{n_3}\cap\cdots\cap S^{-n_k}E_{n_k}).\]
    Since $g\in L^1(\nu)$ is supported on $E_{n_2}$ and $(E_n)_{n\in\N}$ holds a decay of correlations against $L^1(\nu)$ (\cref{def:correlations}), there exists $C>0$ such that
    \begin{equation}
    \begin{split}
    \left| \nu(E'_{n_1}\cap \cdots\cap E'_{n_k})- \nu(E'_{n_1})\nu(E'_{n_2}\cap \cdots\cap E'_{n_k})\right|&=
           \left|\int_{E_{n_1}} g\circ S^{n_2-n_1}d\nu-\nu(E_{n_1})\int_{Y}gd\nu\right|\\
           &\leq C\cdot \rho(n_2-n_1)\|g\|_{L^1}\\
        &=C\rho(n_2-n_1)\nu(E'_{n_2}\cap\cdots\cap E'_{n_k}).
    \end{split}
     \label{eqn:cor_2}
    \end{equation}
    We conclude that the sequence $(E'_n)_{n\in\N}$ satisfies \eqref{P} with summable rate $\rho$. 
    Since $(r_{n}(\omega,\mathcal{E}))_{n\in\N}$ is exactly the sequence $(\kappa_n(\omega))_{n\in\N}$ associated with $(E'_n)_{n\in\N}$, the result follows from  \cref{main}. 
\end{proof}

   It is worth mentioning that \cref{thmA} (similarly for \cref{main}) does not hold for sequences of sets $(E_n)_{n\in\N}$ with $\nu(E_n)=n^{-a}$ in an arbitrary system $(Y,\mathcal{Y},\nu,S)$, even if it is ergodic\footnote{In fact, without mixing assumptions, it is not possible to ensure that the sequence of return times of $(r_n(y,\mathcal{E}))_{n\in\N}$ is well-defined almost surely.}. For instance, let $$(Y,\mathcal{Y},\nu,S)=(X,\mathcal{X},\mu,T)=([0,1),\mathcal{B}([0,1)),\lambda,T_\alpha),$$ where $\lambda$ is the Lebesgue measure and $T_\alpha$ is the irrational rotation by some badly approximable\footnote{$\alpha\in (0,1)$ is badly approximable if there exists $c>0$ such that $|\alpha-p/q|>c/q^2$ for all $p,q\in \N$.} $\alpha\notin \mathbb{Q}$. If we take $\mathcal{E}=(E_n)_{n\in\N}$ where $E_n:=B(0,\frac{n^{-a}}{2})\subseteq [0,1)$, it follows from \cite[Theorem 1.2]{Chaika_Constantine} that for almost every $y\in [0,1)$, $T_\alpha^ny\in E_n$ for infinitely many $n\in \N$ (see also \cite{Fayad,Kim_Rotations,Kurzweil} for related results). This shows that the sequence of return times $(r_n(y,\mathcal{E}))_{n\in\N}$ is well-defined almost surely. Note that for all $x\in [0,1)$, $T_\alpha^{r_n(y,\mathcal{E})}x \in E_n + x-y$. So, for $f=\mathbbm{1}_{[0,1/4]}$ and any $x\in [y+1/2,y+3/4]$, $f(T_\alpha^{r_n(y,\mathcal{E})}x)=0$ for all $n$ large enough. Hence, \cref{thmA} cannot hold for this type of system. Therefore, the sets $(E_n)_{n\in \N}$ must satisfy some type of mixing assumption; as we have shown, the decay of correlations is sufficient. One could, of course, ask whether the result can be obtained under weaker mixing conditions.  Also note that the system $(Y,\mathcal{Y},S,\nu)$ does not need to be mixing itself, as the subsets $E_n$ may be measurable with respect to a proper factor of it.

\begin{question}
    Can we construct interesting universally good sequences of return times $(r_n)_{n\in\N}$ in some more general classes of systems $(Y,\mathcal{Y},\nu,S)$? 
\end{question}

\subsection{Applications to transformations on the interval}
Regarding applications of \cref{thmA} to particular measure-preserving systems, many can be derived from systems with rapidly mixing properties. For a function $f\colon [0,1]\subseteq \R\to \R$, recall that 
 $$\|f\|_{BV}:=\displaystyle \|f\|_{\infty}+\sup_{0=x_0<x_1<\cdots<x_m=1}\sum_{i=0}^{m-1}|f(x_{i+1})-f(x_i)|,$$ where the supremum is taken over all the finite sequences $0=x_0<x_1<\cdots<x_m=1$ of $[0,1]$. We say that $f$ has bounded variation if $\|f\|_{BV}<\infty$.  Note that multidimensional versions of this notion can also be defined (for instance, see \cite{Liverani_Multimensional}).

\begin{definition}
    Let $([0,1],\mathcal{B}([0,1]),\nu,S)$ be a m.p.s. We say that the system has a decay of correlations for functions of bounded variation against $L^1(\nu)$ with rate $\rho\colon\N\to \mathbb{R}_{+}$ if there exists $C>0$ such that
    \[\left|\int_{[0,1]} f\cdot g\circ S^nd\nu-\int_{[0,1]} fd\nu \cdot \int_{[0,1]} gd\nu \right|\leq C\rho(n)\|f\|_{BV}||g||_{L^{1}}\]
    for all $n\in\N$, $f$ of bounded variation and $g\in L^1(\nu)$. \label{def:correlationsBV}
\end{definition} 

\begin{corollary}
  Let $([0,1],\mathcal{B}([0,1]),\nu,S)$ be a system that holds a decay of correlations for functions of bounded variation against $L^1(\nu)$ with a decreasing and summable rate $\rho$, and let $(I_n)_{n\in \N}\subset \mathcal{B}([0,1])$ such that:
  \begin{enumerate}
      \item There exists $\ell \in \N$ such that each $I_n$ is union of at most $\ell$ intervals, and
      \item there exist $a\in (0,1/2)$ and $c>0$ such that $\nu(I_n)=cn^{-a}$.
  \end{enumerate}
  Then, the sequence \[\{n\in \N:\ S^ny\in I_n\}\]
  is pointwise universally $L^2$-good and ergodic for almost every $y\in [0,1]$.\label{cor}
\end{corollary}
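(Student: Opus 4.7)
The plan is to derive \cref{cor} as an immediate consequence of \cref{thmA} by verifying hypothesis \eqref{C1} for $\mathcal{E} := (I_n)_{n\in \N}$; hypothesis \eqref{C2} is just the standing assumption on $\nu(I_n)$.

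The only real input needed is a uniform bound on the bounded-variation norm of the indicators $\mathbbm{1}_{I_n}$. Since each $I_n$ is a union of at most $\ell$ intervals in $[0,1]$, the function $\mathbbm{1}_{I_n}$ has $L^\infty$-norm at most $1$ and total variation at most $2\ell$ (each constituent interval contributes at most two jumps between $0$ and $1$), so $\|\mathbbm{1}_{I_n}\|_{BV} \leq 1 + 2\ell =: K$ uniformly in $n$. Feeding $f = \mathbbm{1}_{I_n}$ and an arbitrary $g\in L^1(\nu)$ into \cref{def:correlationsBV} then yields, for every $m\in \N$,
$$\left|\int_{I_n} g\circ S^m\, d\nu - \nu(I_n)\int_{[0,1]} g\, d\nu\right| \leq C\,\|\mathbbm{1}_{I_n}\|_{BV}\,\rho(m)\,\|g\|_{L^1} \leq CK\,\rho(m)\,\|g\|_{L^1}.$$
This is exactly the decay of correlation against $L^1(\nu)$ of \cref{def:correlations} with the same decreasing summable rate $\rho$ (the support restriction on $g$ there being weaker than what we have proved), so \eqref{C1} holds.

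Both hypotheses of \cref{thmA} being in place, that theorem yields, for almost every $y\in [0,1]$, that the sequence $(r_n(y,\mathcal{E}))_{n\in\N}$ enumerating $\{n\in\N : S^n y \in I_n\}$ in increasing order is pointwise universally $L^2$-good and ergodic, which is the desired conclusion. No substantial obstacle is expected: the corollary is simply a packaging step that transfers the BV-against-$L^1$ decay of correlations of the ambient system to the indicator-against-$L^1$ decay required by \cref{def:correlations}, via the elementary uniform bound on $\|\mathbbm{1}_{I_n}\|_{BV}$; the analytical heavy lifting is entirely contained in \cref{thmA}.
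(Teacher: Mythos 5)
Your proposal is correct and follows exactly the paper's argument: bound $\|\mathbbm{1}_{I_n}\|_{BV}$ uniformly by $1+2\ell$ using that each $I_n$ is a union of at most $\ell$ intervals, deduce that $(I_n)_{n\in\N}$ has decay of correlation against $L^1(\nu)$ with rate $\rho$ in the sense of \cref{def:correlations}, and conclude by \cref{thmA}. The only difference is that you spell out the elementary BV bound explicitly, which the paper leaves implicit.
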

\begin{proof}
     Notice that $\|\mathbbm{1}_{I_n}\|_{BV}$ is uniformly bounded as each $I_n$ is a union of at most $\ell$ intervals. Therefore, the sequence $(I_n)_{n\in\N}$ has a decay of correlation against $L^1(\nu)$ with rate $\rho$. The conclusion follows from  \cref{thmA}.
\end{proof}

As a consequence, \cref{corA} and \cref{corB} are obtained.
\medskip

Finally, in this work, we focus on single ergodic averages, but we believe that the independence condition can also be partially removed when studying versions of random ergodic averages involving more transformations. Specifically, we will refer to semi-random averages: Let $a\in (0,1/14)$ and $(X_n)_{n\in\N}$ be a sequence of independent random variables, taking values in $\{0,1\}$, such that $\mathbb{P}(X_n=1)=n^{-a}$. In \cite{Frantzikinakis_Lesigne_Wierdl}, it was shown that for almost every $\omega\in \Omega$: For any probability space $(X,\mathcal{X},\mu)$ and commuting measure-preserving transformations $T_1,T_2\colon X\to X$, it holds that for any $f_1,f_2\in L^{\infty}(\mu)$, for almost every $x\in X$,
\begin{equation}\lim_{N\to\infty}\dfrac{1}{N}\sum_{n=1}^Nf_1(T_1^nx)\cdot f_2(T_2^{a_n(\omega)}x)=\mathbb{E}_\mu(f_1|\ \mathcal{I}(T_1))(x)\cdot\mathbb{E}_\mu(f_2|\ \mathcal{I}(T_2))(x),
    \label{semi_random}
\end{equation}
where $(a_n(\omega))_{n\in\N}$ denotes the random sequence defined in \cref{def:random_sequence}.
Those averages are called \emph{semi-random ergodic averages} (see also \cite{Frantzikinakis_Lesigne_Wierdl_Szemeredi} for the case $a\in (0,1/2)$ when $T_1=T_2$). Thus, we naturally ask for an analogous result to \cref{cor} in the context of semi-random averages. As a first approach, we pose the following question:
\begin{question}
    Let $a\in (0,1/2)$ and an integer $p\geq 2$. Is the sequence \begin{equation}
        \{n\in \N:\ p^ny\mod{1}\in (0,n^{-a})\}\label{px}
    \end{equation} good for the pointwise convergence of averages \eqref{semi_random} when considering functions in $L^\infty(\mu)$ for almost every $y\in [0,1]$?\label{Q:semi_random}
\end{question}
 It is worth mentioning that semi-random averages in the non-independent case have been partially addressed by the third author in \cite{Tesis}, but with a probabilistic approach that does not seem to contain the dynamical examples of interest in this paper. We are currently exploring an extension of \cref{thmA,main} to the setting of semi-random averages.

We can also ask about the sequence of 2-random ergodic averages such as
 \begin{equation}
     \dfrac{1}{N}\sum_{n=1}^Nf_1(T_1^{a_n(\omega)}x)\cdot f_2(T_2^{a_n(\omega)}x).\label{2random}
 \end{equation}
The convergence of those averages is known when $a\in (0,1/2)$  \cite{Frantzikinakis_Lesigne_Wierdl} (see also \cite{Frantzikinakis_Lesigne_Wierdl_Szemeredi}). Finally, we ask about the analogous version of  \cref{Q:semi_random} for these averages.
\begin{question}
    Is the sequence defined in \eqref{px} good for the pointwise convergence of the averages \eqref{2random} when considering functions in $L^\infty(\mu)$ for almost every $y\in [0,1]$?
\end{question}

\bibliographystyle{abbrv}
\bibliography{bibliography,refs}

\end{document}